\DeclareMathOperator{\tr}{tr}
\DeclareMathOperator{\rk}{rk}
\DeclareMathOperator{\GL}{GL}
\DeclareMathOperator{\dom}{dom}
\DeclareMathOperator{\hdom}{hdom}
\DeclareMathOperator{\spa}{span}
\DeclareMathOperator{\opm}{M}
\DeclareMathOperator{\oph}{H}
\DeclareMathOperator{\QM}{QM}
\newcommand{\ve}{\varepsilon}
\newcommand{\N}{\mathbb{N}}
\newcommand{\R}{\mathbb{R}}
\newcommand{\C}{\mathbb{C}}
\newcommand{\cA}{\mathcal{A}}
\newcommand{\cB}{\mathcal{B}}
\newcommand{\cD}{\mathcal{D}}
\newcommand{\cO}{\mathcal{O}}
\newcommand{\cR}{\mathcal{R}}
\newcommand{\rr}{\mathbbm r}
\newcommand{\rs}{\mathbbm s}
\newcommand{\rc}{\mathbbm c}
\newcommand{\mm}{\mathbbm m}
\newcommand{\qq}{\mathbbm q}
\newcommand{\rv}{\mathbbm v}
\newcommand{\fX}{\mathfrak{X}}
\newcommand*{\mathlarger}[1]{\scalebox{1.35}{$#1$}}
\newcommand*{\mat}[1]{\opm_{#1}(\C)}
\newcommand*{\her}[1]{\oph_{#1}(\C)}
\newcommand{\cfm}{\opm_\infty(\C)}
\newcommand{\hV}{S}
\newcommand{\Langle}{\mathop{<}\!}
\newcommand{\Rangle}{\!\mathop{>}}
\newcommand{\px}{\C\!\Langle x\Rangle}
\newcommand{\pxx}{\C\!\Langle x,x^*\Rangle}
\def\moverlay{\mathpalette\mov@rlay}
\def\mov@rlay#1#2{\leavevmode\vtop{
		\baselineskip\z@skip \lineskiplimit-\maxdimen
		\ialign{\hfil$#1##$\hfil\cr#2\crcr}}}
\newcommand{\plangle}{\moverlay{(\cr<}}
\newcommand{\prangle}{\moverlay{)\cr>}}
\newcommand{\rx}{\C\plangle x \prangle}
\newcommand{\rex}{\mathfrak{R}_{\C}(x)}
\def\un{\operatorname{U}_d(\C)}
\newtheorem{thm}{Theorem}[section]
\newtheorem{lem}[thm]{Lemma}
\newtheorem{cor}[thm]{Corollary}
\newtheorem{prop}[thm]{Proposition}
\newtheorem{thmA}{Theorem}
\theoremstyle{definition}
\newtheorem{exa}[thm]{Example}
\theoremstyle{remark}
\newtheorem{rem}[thm]{Remark}
\numberwithin{equation}{section}
\title{Hilbert's 17th problem in free skew fields}
\author[J. Vol\v{c}i\v{c}]{Jurij Vol\v{c}i\v{c}${}^\star$}
\address{Jurij Vol\v{c}i\v{c}, Department of Mathematics, Texas A\&M University}
\email{volcic@math.tamu.edu}
\thanks{${}^\star$Supported by the NSF grant DMS 1954709.}
\subjclass[2020]{Primary 13J30, 16K40; Secondary 15A22, 26C15, 16W10}
\date{\today}
\keywords{Noncommutative rational function, free skew field, Hilbert's 17th problem, Positivstellensatz, linear matrix inequality, spectrahedron, linear matrix pencil}
\begin{document}
	
\begin{abstract}
This paper solves the rational noncommutative analog of Hilbert's 17th problem: if a noncommutative rational function is positive semidefinite on all tuples of hermitian matrices in its domain, then it is a sum of hermitian squares of noncommutative rational functions. This result is a generalization and culmination of earlier positivity certificates for noncommutative polynomials or rational functions without hermitian singularities. More generally, a rational Positivstellensatz for free spectrahedra is given: a noncommutative rational function is positive semidefinite or undefined at every matricial solution of a linear matrix inequality $L\succeq0$ if and only if it belongs to the rational quadratic module generated by $L$. The essential intermediate step towards this Positivstellensatz for functions with singularities is an extension theorem for invertible evaluations of linear matrix pencils.
\end{abstract}

\maketitle


\section{Introduction}

In his famous problem list of 1900, Hilbert asked whether every positive rational function can be written as a sum of squares of rational functions. The affirmative answer by Artin in 1927 laid ground for the rise of real algebraic geometry \cite{BCR}. Several other sum-of-squares certificates (Positivstellens\"atze) for positivity on semialgebraic sets followed; since the detection of sums of squares became viable with the emergence of semidefinite programming \cite{WSV}, these certificates play a fundamental role in polynomial optimization \cite{Las,BPT}.

Positivstellens\"atze are also essential in the study of polynomial and rational inequalities in matrix variables, which splits into two directions.
The first one deals with inequalities where the size of the matrix arguments is fixed \cite{PS,KSV}.
The second direction attempts to answer questions about positivity of noncommutative polynomials and rational functions when matrix arguments of all finite sizes are considered. Such questions naturally arise in control systems \cite{dOHMP}, operator algebras \cite{Oza} and quantum information theory \cite{DLTW,many}.
This (dimension-)free real algebraic geometry started with the seminal work of Helton \cite{Hel} and McCullough \cite{McC}, who proved that a noncommutative polynomial is positive semidefinite on all tuples of hermitian matrices precisely when it is a sum of hermitian squares of noncommutative polynomials.
The purpose of this paper is to extend this result to noncommutative rational functions.

Let $x=(x_1,\dots,x_d)$ be freely noncommuting variables. The free algebra $\px$ of noncommutative polynomials admits a universal skew field of fractions $\rx$, also called the free skew field \cite{Coh,CR}, whose elements are noncommutative rational functions. We endow $\rx$ with the unique involution $*$ that fixes the variables and conjugates the scalars. One can consider positivity of noncommutative rational functions on tuples of hermitian matrices. For example, let
$$\rr=x_3^2+x_4^2-(x_3x_1+x_4x_2)(x_1^2+x_2^2)^{-1}(x_1x_3+x_2x_4)\in\rx.$$
It turns out $\rr(X)$ is a positive semidefinite matrix for every tuple of hermitian matrices $X=(X_1,X_2,X_3,X_4)$ belonging to the domain of $\rr$ 
(meaning $\ker X_1\cap\ker X_2=\{0\}$ in this particular case). 
One way to certify this is by observing that $\rr=\rr_1\rr_1^*+\rr_2\rr_2^*$ where
$$\rr_1=(x_4-x_3x_1^{-1}x_2)x_2(x_1^2+x_2^2)^{-1}x_1, \qquad
\rr_2=(x_4-x_3x_1^{-1}x_2)(1+x_2x_1^{-2}x_2)^{-1}.$$
The solution of Hilbert's 17th problem in the free skew field presented in this paper (Corollary \ref{c:H17}) states that every $\rr\in\rx$, positive semidefinite on its hermitian domain, is a sum of hermitian squares in $\rx$.
This statement was proved in \cite{KPV} for noncommutative rational functions $\rr$ that are {\it regular}, meaning that $\rr(X)$ is well-defined for every tuple of hermitian matrices. As with most noncommutative Positivstellens\"atze, at the heart of this result is a variation of the Gelfand-Naimark-Segal (GNS) construction. Namely, if $\rr\in\rx$ is not a sum of hermitian squares, one can construct a tuple of finite-dimensional hermitian operators $Y$ that is a sensible candidate for witnessing non-positive-definiteness of $\rr$. However, the construction itself does not guarantee that $Y$ actually belongs to the domain of $\rr$. This is not a problem if one assumes that $\rr$ is regular, as it was done in \cite{KPV}. However, it is worth mentioning that deciding regularity of a noncommutative rational function is a challenge on its own, as observed in \cite{KPV}. In this paper, the domain issue is resolved with an extension result: the tuple $Y$ obtained from the GNS construction can be extended to a tuple of finite-dimensional hermitian operators in the domain of $\rr$ without losing the desired features of $Y$.

The first main theorem of this paper pertains to linear matrix pencils and is key for the extension mentioned above. It might also be of independent interest in the study of quiver representations and semi-invariants \cite{Kin,DM}. Let $\otimes$ denote the Kronecker product of matrices.

\begin{thmA}\label{ta:1}
Let $\Lambda\in\mat{e}^d$ be such that $\Lambda_1\otimes X_1+\cdots+\Lambda_d\otimes X_d$ is invertible for some $X\in\mat{k}^d$. 
If $Y\in\mat{\ell}^d$, $Y'\in\mat{m\times \ell}^d$, $Y''\in\mat{\ell\times m}^d$ are such that
$$
\Lambda_1\otimes \begin{pmatrix} Y_1 \\ Y'_1\end{pmatrix}+\cdots+
\Lambda_d\otimes \begin{pmatrix} Y_d \\ Y'_d\end{pmatrix}
\qquad\text{and}\qquad
\Lambda_1\otimes \begin{pmatrix} Y_1 & Y''_1\end{pmatrix}+\cdots+
\Lambda_d\otimes \begin{pmatrix} Y_d & Y''_d\end{pmatrix}
$$
have full rank, then there exists $Z\in\mat{n}^d$ for some $n\ge m$ such that
$$
\Lambda_1\otimes \left(\begin{array}{cc}
Y_1 & \begin{matrix} Y''_1 & 0\end{matrix} \\
\begin{matrix} Y'_1 \\ 0\end{matrix} & \mathlarger{Z_1}
\end{array}\right)
+\cdots+
\Lambda_d\otimes \left(\begin{array}{cc}
Y_d & \begin{matrix} Y''_d & 0\end{matrix} \\
\begin{matrix} Y'_d \\ 0\end{matrix} & \mathlarger{Z_d}
\end{array}\right)
$$
is invertible.
\end{thmA}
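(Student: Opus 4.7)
My plan is to reduce Theorem~A to a matrix-completion problem and then use the assumption that $L(X)$ is invertible to furnish the necessary flexibility via a dilation with copies of $X$. After the standard reshuffle of the Kronecker product---which identifies $\sum_i \Lambda_i \otimes (\text{block matrix})$ with the block matrix formed by taking $\sum_i \Lambda_i \otimes (\text{individual block})$---and partitioning $Z = \begin{pmatrix} A & B \\ C & D\end{pmatrix}$ with $A \in \mat{m}^d$ and $D \in \mat{n-m}^d$, the target matrix becomes
\[
\begin{pmatrix} L(Y) & L(Y'') & 0 \\ L(Y') & L(A) & L(B) \\ 0 & L(C) & L(D)\end{pmatrix}, \qquad L(\cdot) := \sum_i \Lambda_i\otimes(\cdot)_i.
\]
Choosing $n = m + ks$ and $D = X^{\oplus s}$ makes $L(D) = L(X)^{\oplus s}$ invertible, so by Schur complement it suffices to find $A, B, C$ such that
\[
\tilde M(A, B, C) := \begin{pmatrix} L(Y) & L(Y'') \\ L(Y') & L(A) - L(B)L(D)^{-1}L(C)\end{pmatrix}
\]
is invertible.

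Using only the two rank hypotheses, I would then verify that the set
\[
\mathcal U := \Bigl\{ E\in\mat{em} : \begin{pmatrix} L(Y) & L(Y'') \\ L(Y') & E\end{pmatrix} \text{ is invertible}\Bigr\}
\]
is a nonempty Zariski-open subset of $\mat{em}$. The full column rank of $\begin{pmatrix}L(Y)\\L(Y')\end{pmatrix}$ shows that $L(Y')$ embeds $\ker L(Y)$ into $\C^{em}$; the full row rank of $\begin{pmatrix}L(Y) & L(Y'')\end{pmatrix}$ shows that $L(Y'')$ surjects onto $\operatorname{coker} L(Y)$; and after choosing complementary subspaces one exhibits an explicit $E$ linking $\ker L(Y)$ (via $L(Y')$) to a complement of $\operatorname{im} L(Y)$ (via $L(Y'')$), yielding an invertible bordering.

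The crux of the proof, and what I expect to be the main obstacle, is to realize some $E \in \mathcal U$ in the form $L(A) - L(B)L(D)^{-1}L(C)$. My plan is to treat $A, B, C$ as free parameters and prove that $\det\tilde M(A, B, C)$ is not identically zero as a polynomial in their entries; any single nonvanishing evaluation then yields the required $Z$. To establish non-vanishing, I would extract a suitable leading term---say in a grading by total degree in the entries of $B$ and $C$---or alternatively treat $(A, B, C)$ as noncommuting matrix indeterminates, viewing $\tilde M$ as a matrix over the free skew field and invoking the fullness of $L$ (which is precisely the free-skew-field content of the hypothesis that $L(X)$ is invertible for some $X$) to show that the image of the Schur-complement map grows rich enough in $\mat{em}$ as $s$ increases. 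Making this dimension count interact correctly with the rigid pencil structure---so that the achievable Schur complements actually hit the open set $\mathcal U$ carved out by the rank conditions---is the delicate point where the full strength of the hypothesis on $\Lambda$ must be used.
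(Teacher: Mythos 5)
Your reduction is clean as far as it goes: the perfect-shuffle identification, the choice $D = X^{\oplus s}$ making $L(D)$ invertible, the Schur complement, and the observation that the two rank hypotheses force the bordering set $\mathcal U$ to be a nonempty Zariski-open subset of $\mat{em}$ are all correct. But you stop exactly where the real work begins, and you say so yourself. The set of matrices of the form $L(A)-L(B)L(D)^{-1}L(C)$ is the image of a highly constrained polynomial map (its linear part $A\mapsto L(A)$ alone lands in a $dm^2$-dimensional subspace of $\mat{em}$, which is proper unless $d\ge e^2$), and nothing in your argument shows that this image is Zariski-dense, or even that it meets the particular open set $\mathcal U$. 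Equivalently, you need $\det\tilde M(A,B,C)$ to be a nonzero polynomial, but ``extract a suitable leading term'' and ``invoke the fullness of $L$'' are plans, not proofs; the fullness of $\Lambda$ enters the hypothesis in a more structural way than just to make $L(D)$ invertible, and it is precisely that structural input that is missing from your sketch.

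The paper closes this gap with a different organization. Rather than completing $(Y,Y',Y'')$ to an invertible square all at once, it first proves (Proposition~\ref{p:side}) that a rectangular full-rank evaluation $\Lambda(X)$, $X\in\mat{m\times\ell}^d$, can be completed to an invertible square evaluation $\Lambda\bigl(\begin{smallmatrix}X & \widehat X\\ 0 & \widecheck X\end{smallmatrix}\bigr)$. The engine is Lemma~\ref{l:irr}: after reducing to $\Lambda$ irreducible, one forms an affine pencil $\widetilde\Lambda$ in fresh commuting variables whose constant term records the given rectangular block, and shows $\widetilde\Lambda$ is \emph{full} by a direct rank count over arbitrary $U\widetilde\Lambda V=0$, using irreducibility to get the bound $\rk U_p+\rk V_q\le e-1$ blockwise. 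Fullness then yields an invertible matrix evaluation by Proposition~\ref{p:others}. With the one-sided completion in hand, the paper applies it to $\binom{Y}{Y'}$ and to $(Y\ \ Y'')$ (the transpose version) separately, and then stitches the two invertible square matrices together as off-diagonal blocks of a larger pencil, with an $\varepsilon\,\Lambda(Y)$ perturbation in the corner; linearity in $\varepsilon$ forces invertibility for all $\varepsilon\ne0$, and row/column operations put the result in the required block form. In short, the nonvanishing of a determinant that you leave open is established in the paper not by estimating a leading term of your Schur complement, but by a fullness argument for a carefully designed auxiliary pencil, and the combination of the two one-sided completions is done algebraically rather than by a direct bordering. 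To repair your proof you would need, at minimum, an argument in the spirit of Lemma~\ref{l:irr} showing that the affine pencil in $(A,B,C)$ (with $D$ fixed) is full; as written, this is a genuine gap.
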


See Theorem \ref{t:nonherm} for the proof. Together with a truncated rational imitation of the GNS construction, Theorem \ref{ta:1} leads to a rational Positivstellensatz on free spectrahedra. Given a monic hermitian pencil $L=I+H_1x_1+\cdots +H_dx_d$, the associated free spectrahedron $\cD(L)$ is the set of hermitian tuples $X$ satisfying the linear matrix inequality $L(X)\succeq0$. Since every convex solution set of a noncommutative polynomial is a free spectrahedron \cite{HM1}, the following statement is called a rational convex Positivstellensatz, and it generalizes its analogs in the polynomial context \cite{HKM} and regular rational context \cite{Pas}.

\begin{thmA}\label{ta:2}
Let $L$ be a hermitian monic pencil and $\rr\in\rx$. Then $\rr\succeq0$ on $\cD(L)\cap\dom\rr$ if and only if $\rr$ belongs to the  rational quadratic module generated by $L$:
$$\rr=\rr_1^*\rr_1+\cdots+\rr_m^*\rr_m+\rv_1^*L\rv_1+\cdots+\rv_n^*L\rv_n$$
where $\rr_i\in\rx$ and $\rv_j$ are vectors over $\rx$.
\end{thmA}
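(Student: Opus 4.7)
The reverse direction is immediate: each summand $\rr_i^*\rr_i$ is a hermitian square, and $\rv_j^* L \rv_j \succeq 0$ wherever $\rv_j$ is defined and $L \succeq 0$. For the forward direction I would argue by contrapositive, combining a Hahn-Banach separation with a truncated rational Gelfand-Naimark-Segal construction, and closing off the argument with Theorem~\ref{ta:1} to resolve the domain obstruction.

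Suppose $\rr$ is not in the rational quadratic module $M$ generated by $L$. I would choose a finite-dimensional hermitian subspace $\cV \subset \rx$ containing $\rr$ together with the rational expressions appearing in a chosen minimal realization of $\rr$ and in the terms of $M$ relevant to the argument. Since $M \cap \cV$ is a proper convex cone missing $\rr$, Hahn-Banach yields a hermitian linear functional $\varphi : \cV \to \C$ with $\varphi \geq 0$ on $M \cap \cV$ and $\varphi(\rr) < 0$.

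Using $\varphi$, I would perform a truncated rational GNS: form the hermitian form $(\rs,\rs') \mapsto \varphi(\rs^*\rs')$ on a suitable space of rational vectors, quotient by its kernel to obtain a finite-dimensional inner product space $V$, and let $Y_i$ be the hermitian operator induced by multiplication by $x_i$. The inequality $\varphi(\rv^* L \rv) \geq 0$ for appropriate $\rv$ forces $L(Y) \succeq 0$, placing $Y$ in $\cD(L)$; a distinguished vector $\xi \in V$ encodes the negativity, $\langle \rr(Y)\xi, \xi\rangle = \varphi(\rr) < 0$, at least formally. However, the linear pencil $\Lambda$ encoding the denominators of $\rr$ (from a realization $\rr = c^*\Lambda(x)^{-1}b$) need not be invertible at $Y$, so a priori $Y \notin \dom\rr$ and $\rr(Y)$ is not actually defined.

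Theorem~\ref{ta:1} is precisely the tool to repair this. Since $\rr$ is defined on some matrix tuple, $\Lambda$ admits an invertible evaluation somewhere, and by choosing the rectangular blocks $Y',Y''$ to capture how $\Lambda$ acts on the images at $\xi$ of the numerator and denominator vectors of the realization, the full-rank hypotheses of Theorem~\ref{ta:1} can be arranged. Theorem~\ref{ta:1} then produces $Z$ so that $\Lambda$ is invertible at the enlarged tuple $X$, whence $X \in \dom\rr$; the vector $\xi$ padded by zero gives $\langle \rr(X)\tilde\xi,\tilde\xi\rangle < 0$ because the $Y$-block is preserved and the zero padding ensures that the relevant entries of $\Lambda(X)^{-1}$ reproduce the formal GNS computation. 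I expect the principal obstacle to be the compatibility of the extension with both hermiticity and membership in $\cD(L)$: the block pattern in Theorem~\ref{ta:1} is not symmetric and $L$ is not involved, so the final step likely requires either a hermitian variant of Theorem~\ref{ta:1} or the device of bundling $L$ and $\Lambda$ into a single composite pencil whose invertible extension simultaneously witnesses $X \in \dom\rr$ and $L(X) \succeq 0$.
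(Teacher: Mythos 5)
Your outline correctly captures the architecture of the paper's proof (contrapositive, Hahn--Banach separation, truncated rational GNS, Theorem~\ref{ta:1} to fix the domain), and you correctly identify the two obstacles at the end. However, several load-bearing steps are left unresolved or described inaccurately, and these are where the actual work lies, so as written this is a roadmap rather than a proof.

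First, the Hahn--Banach step needs a \emph{closed} salient cone, which you never establish. The paper's Proposition~\ref{p:closed} shows the truncated quadratic module $Q_\ell$ is closed using Carath\'eodory's theorem together with the local-global linear dependence principle for noncommutative rational functions, which produces a point $X\in\hdom r$ at which no nonzero element of $V_{2\ell+1}$ vanishes and hence furnishes a norm compatible with the cone. Second, your mechanism for arranging the full-rank hypotheses of Theorem~\ref{ta:1} (``choosing $Y',Y''$ to capture how $\Lambda$ acts on the images at $\xi$ of the numerator and denominator vectors'') is not what makes this work. The paper first fixes an auxiliary positive functional $\phi$ (Lemma~\ref{l:fun}) under which the multiplication operators $\fX_j$ are hermitian elements of $\cfm$ and, crucially, $\fX\in\dom_\infty r$ (Lemma~\ref{l:fX}). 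The finite truncation $(X,Y)$ is obtained by restricting $\fX_j$ to the low-degree subspace with respect to the GNS form $\langle\cdot,\cdot\rangle$, and the full-rank hypotheses of Proposition~\ref{p:main} (and hence of Theorem~\ref{ta:1}) hold precisely \emph{because} the infinite tail $W$ coming from $\fX\in\dom_\infty r$ plays the role of the auxiliary data in the hypothesis. Without this observation you have no way to verify the hypotheses of Theorem~\ref{ta:1}, so the domain repair cannot start.

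Third, your proposed fixes for the hermitian and $\cD(L)$ obstacles --- a hermitian variant of Theorem~\ref{ta:1}, or bundling $L$ with the linearization pencil --- are not needed and are not what the paper does. Proposition~\ref{p:main} derives a \emph{hermitian} extension from the generic, non-hermitian Theorem~\ref{ta:1}: the set of admissible extensions $Z'$ is a nonempty Zariski-open subset of $\mat{n}^{1+d}$, so it meets the Zariski-dense set of hermitian tuples with $Z'_0\succ0$, and a congruence $Z_j=EZ'_jE^*$ absorbs $Z'_0$ into the identity block. Membership in $\cD(L)$ is then enforced separately by rescaling ($Z\mapsto\ve_1 Z$, $E\mapsto\ve_2 E$): invertibility in Proposition~\ref{p:main} persists for all but finitely many $\ve_i$, so $Z,E$ can be made arbitrarily small, and the strict positivity $\lambda(\rv^*L\rv)>0$ of~\eqref{e:L} on $V_{\ell+1}$ then forces $L(\widetilde X)\succeq0$ by a perturbation argument. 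Supplying these three pieces --- closedness of the truncated cone, the $\dom_\infty$ observation feeding Theorem~\ref{ta:1}, and the Zariski-density plus shrinking argument --- is the actual content missing from your sketch.
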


A more precise quantitative version is given in Theorem \ref{t:main} and has several consequences. The solution of Hilbert's 17th problem in $\rx$ is obtained by taking $L=1$ in Corollary \ref{c:H17}. Versions of Theorem \ref{ta:2} for invariant (Corollary \ref{c:inv}) and real (Corollary \ref{c:real}) noncommutative rational functions are also given. Furthermore, it is shown that the rational Positivstellensatz also holds for a family of quadratic polynomials describing non-convex sets (Subsection \ref{ss:noncvx}). As a contribution to optimization, Theorem \ref{ta:2} implies that the eigenvalue optimum of a noncommutative rational function on a free spectrahedron can be obtained by solving a single semidefinite program (Subsection \ref{ss:opt}), much like in the noncommutative polynomial case \cite{BPT,BKP} (but not in the classical commutative setting). 

Finally, Section \ref{s6} contains complementary results about domains of noncommutative rational functions. It is shown that every $\rr\in\rx$ can be represented by a formal rational expression that is well-defined at every hermitian tuple in the domain of $\rr$ (Proposition \ref{p:rep}); this statement fails in general if arbitrary matrix tuples are considered. On the other hand, a Nullstellensatz for cancellation of non-hermitian singularities is given in Proposition \ref{p:canc}.

\subsection*{Acknowledgment}
The author thanks Igor Klep for valuable comments and suggestions which improved the presentation of this paper.

\section{Preliminaries}\label{s2}

In this section we establish terminology, notation and preliminary results on noncommutative rational functions that are used throughout the paper. Let $\mat{m\times n}$ denote the space of complex $m\times n$ matrices, and $\mat{n}=\mat{n\times n}$. Let $\her{n}$ denote the real space of hermitian $n\times n$ matrices. For $X=(X_1,\dots,X_d)\in \mat{m\times n}^d$, $A\in\mat{p\times m}$ and $B\in\mat{n\times q}$ we write 
$$AXB= (AX_1B,\dots,AX_dB)\in \mat{p\times q}^d, \qquad X^*=(X_1^*,\dots,X_d^*)\in\mat{n\times m}^d.$$

\subsection{Free skew field}

We define noncommutative rational functions using formal rational expressions and their matrix evaluations as in \cite{KVV2}. {\bf Formal rational expressions} are syntactically valid combinations of scalars, freely noncommuting variables $x=(x_1,\dots,x_d)$, rational operations and parentheses. More precisely, a formal rational expression is an ordered (from left to right) rooted tree whose leaves have labels from $\C\cup\{x_1,\dots,x_d\}$, and every other node is either labeled $+$ or $\times$ and has two children, or is labeled ${}^{-1}$ and has one child. For example, $((2+x_1)^{-1}x_2)x_1^{-1}$ is a formal rational expression corresponding to the ordered tree

\begin{center}
\begin{tikzpicture}[scale=0.8,every node/.style={draw=black,circle,inner sep=0pt,minimum size=3ex}]
\node (a) at (0,0) {$2$};
\node (b) at (2,0) {$x_1$};
\node (c) at (1,1) {$+$};
\node (d) at (1,2) {${}^{-1}$};
\node (e) at (3,2) {$x_2$};
\node (f) at (2,3) {$\times$};
\node (g) at (4,3) {${}^{-1}$};
\node (h) at (4,2) {$x_1$};
\node (i) at (3,4) {$\times$};
\path [-] (a) edge (c);
\path [-] (b) edge (c);
\path [-] (c) edge (d);
\path [-] (d) edge (f);
\path [-] (e) edge (f);
\path [-] (f) edge (i);
\path [-] (h) edge (g);
\path [-] (g) edge (i);
\end{tikzpicture}
\end{center}

A {\bf subexpression} of a formal rational expression $r$ is any formal rational expression which appears in the construction of $r$ (i.e., as a sub-tree). For example, all subexpressions of $((2+x_1)^{-1}x_2)x_1^{-1}$ are
$$2,\ x_1,\ 2+x_1,\ (2+x_1)^{-1},\ x_2,\ (2+x_1)^{-1}x_2,\ x_1^{-1},\ ((2+x_1)^{-1}x_2)x_1^{-1}.$$

Given a formal rational expression $r$ and $X\in \mat{n}^d$, the evaluation $r(X)$ is defined in the natural way if all inverses appearing in $r$ exist at $X$. The set of all $X\in\mat{n}^d$ such that $r$ is defined at $X$ is denoted $\dom_n r$. The {\bf (matricial) domain} of $r$ is
$$\dom r = \bigcup_{n\in\N} \dom_n r.$$
Note that $\dom_n r$ is a Zariski open set in $\mat{n}^d$ for every $n\in\N$. A formal rational expression $r$ is {\bf non-degenerate} if $\dom r\neq\emptyset$; let $\rex$ denote the set of all non-degenerate formal rational expressions. On $\rex$ we define an equivalence relation $r_1\sim r_2$ if and only if $r_1(X)=r_2(X)$ for all $X\in\dom r_1\cap \dom r_2$. Equivalence classes with respect to this relation are called {\bf noncommutative  rational functions}. By \cite[Proposition 2.2]{KVV2} they form a skew field denoted $\rx$, which is the universal skew field of fractions of the free algebra $\px$ by \cite[Section 4.5]{Coh}. The equivalence class of $r\in\rex$ is denoted $\rr\in\rx$; we also write $r\in\rr$ and say that $r$ is a representative of the noncommutative rational function $\rr$.

There is a unique involution $*$ on $\rx$ that is determined by $\alpha^*=\overline{\alpha}$ for $\alpha\in\C$ and $x_j^*=x_j$ for $j=1,\dots,d$. Furthermore, this involution lifts to an involutive map $*$ on the set $\rex$: in terms of ordered trees, $*$ transposes a tree from left to right and conjugates the scalar labels. Note that $X\in\dom r$ implies $X^*\in\dom r^*$ for $r\in \rex$. 

\subsection{Hermitian domain}

For $r\in\rex$ let $\hdom_n r= \dom_n r\cap \her{n}^d$. Then
$$\hdom r = \bigcup_{n\in\N} \hdom_n r$$
is the {\bf hermitian domain} of $r$. Note that $\hdom_n r$ is Zariski dense in $\dom_n r$ because $\her{n}$ is Zariski dense in $\mat{n}$ and $\dom_n r$ is Zariski open in $\mat{n}^d$. Finally, we define the (hermitian) domain of a noncommutative rational function: for $\rr\in\rx$ let
$$\dom\rr = \bigcup_{r\in\rr} \dom r,\qquad \hdom\rr = \bigcup_{r\in\rr} \hdom r.$$
By the definition of the equivalence relation on non-degenerate expressions, $\rr$ has a well-defined evaluation at $X\in\dom\rr$, written as $\rr(X)$, which equals $r(X)$ for any representative $r$ of $\rr$ that has $X$ in its domain. The following proposition is a generalization of \cite[Proposition 3.3]{KPV} and is proved in Subsection \ref{ss:rep}.

\begin{prop}\label{p:rep}
For every $\rr\in\rx$ there exists $r\in\rr$ such that $\hdom \rr=\hdom r$.
\end{prop}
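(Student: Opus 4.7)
The plan is to extract a distinguished representative of $\rr$ from a minimal linear descriptor realization and to unfold it into a tree-shaped formal rational expression whose hermitian singularities coincide with those of $\rr$ itself.

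First, I would invoke the standard existence of a minimal linear realization $\rr = c^* L(x)^{-1} b$, where $L(x) = I + L_1 x_1 + \cdots + L_d x_d$ is an $n \times n$ pencil over $\C$ and $b,c \in \C^n$. Minimality (controllability and observability of the realization, in the free-skew-field sense of Cohn and of the author's prior work) guarantees that $\dom \rr$ equals the Zariski open set of tuples $X$ at which $L(X)$ is invertible (with the Kronecker-product convention), and hence $\hdom \rr$ is its intersection with hermitian tuples.

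Next, I would pass to the self-adjoint polynomial matrix $N(x) := L(x) L(x)^*$, which is an $n \times n$ matrix over $\px$. At every hermitian $X$, $N(X)$ is hermitian positive semidefinite, invertible exactly when $L(X)$ is, and $L(X)^{-1} = L(X)^* N(X)^{-1}$. Consequently $\rr(X) = c^* L(X)^* N(X)^{-1} b$ throughout $\hdom \rr$, and it suffices to produce a formal rational expression $r$ computing $c^* L(x)^* N(x)^{-1} b$ whose hermitian domain equals $\{X \in \her{\cdot}^d : N(X) \text{ invertible}\}$. This is done by iteratively applying the Schur-complement identity (equivalently, an $LDL^*$-type recursion) to unfold the matrix inverse $N(x)^{-1}$. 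Because $N(X)$ is hermitian positive semidefinite on hermitian tuples, Sylvester's criterion ensures that invertibility of $N(X)$ is equivalent to the positive-definiteness of all leading principal blocks and of every successive Schur complement produced by the recursion; hence each pivot encountered in the unfolding is invertible at a hermitian $X$ precisely when $N(X)$ is, and no spurious hermitian singularity is introduced in the resulting formal expression.

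The main obstacle is precisely this final unfolding: realising a matrix inverse as a single tree-shaped formal expression without creating extra singularities. For an arbitrary self-adjoint matrix of noncommutative polynomials there is no reason to expect Schur-complement pivots to vanish only where the entire matrix is singular, and this failure is exactly what causes the analogous statement to break down for non-hermitian tuples, as the paper's introduction to this section anticipates. It is the positive-semidefinite structure of $L(X) L(X)^*$ on hermitian inputs, together with Sylvester's criterion, that makes the Schur-complement unfolding cost-free and yields a representative $r \in \rr$ with $\hdom r = \hdom \rr$.
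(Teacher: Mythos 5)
Your proof follows essentially the same strategy as the paper's: pass to a minimal linear descriptor realization $\rr = u^*M^{-1}v$, use minimality to confine $\dom\rr$ to the locus where the pencil $M$ is invertible, and then unfold $M^{-1}$ via Schur complements of the self-adjoint matrix $M^*M$ (the paper) or $MM^*$ (you), which is positive semidefinite at hermitian tuples, so each pivot is nonsingular exactly where the whole matrix is and no spurious hermitian singularities are introduced. The paper packages the unfolding as a standalone induction (Lemma \ref{l:phd}) for general matrices over $\rex$, but the mechanism is identical to your Sylvester/$LDL^*$ recursion.

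Two minor imprecisions to flag. First, the pencil in a minimal realization need not be monic, so you should write $L(x)=L_0+L_1x_1+\cdots+L_dx_d$; for instance $\rr = x_1^{-1}$ admits no monic realization since $\rr(0)$ is undefined. Second, minimality and \cite[Theorem 1.4]{CR} yield only the inclusion $\dom\rr\subseteq\{X:\det L(X)\neq0\}$, not equality of domains; equality on the hermitian slice is precisely what the Schur-complement representative establishes a posteriori, and equality on all of $\dom\rr$ is not claimed (and would be a stronger statement than the proposition). Neither issue affects the correctness of the overall argument.
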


\begin{rem}\label{r:rep}
There are noncommutative rational functions such that $\dom \rr\neq\dom r$ for every $r\in\rr$, see Example \ref{ex:hdom} or \cite[Example 3.13]{Vol}.
\end{rem}

\subsection{Linear representation of a formal rational expression}

A fundamental tool for handling noncommutative rational functions are linear representations (also linearizations or realizations) \cite{CR,Coh,HMS}. Let $r\in\rex$. By \cite[Theorem 4.2 and Algorithm 4.3]{HMS} there exist $e\in\N$, vectors $u,v\in\C^e$ and an affine matrix pencil $M=M_0+M_1x_1+\cdots+M_dx_d$ with $M_j\in\mat{e}$ satisfying the following. For every unital $\C$-algebra $\cA$ and $a\in\cA^d$,
\begin{enumerate}[(i)]
	\item if $r$ can be evaluated at $a$, 
	then $M(a)\in\GL_e(\cA)$ and
	$r(a) = u^* M(a)^{-1}v$;
	\item if $M(a)\in\GL_e(\cA)$ and $\cA=\mat{n}$ for some $n\in\N$, then $r$ can be evaluated at $a$.
\end{enumerate}
We say that the triple $(u,M,v)$ is a {\bf linear representation} of $r$ of size $e$. Usually, linear representations are defined for noncommutative rational functions and with less emphasis on domains; however, the definition above is more convenient for the purpose of this paper.

\begin{rem}
In the definition of a linear representation, (ii) is valid not only for $\mat{n}$ but more broadly for stably finite algebras \cite[Lemma 5.2]{HMS}. However, it may fail in general, e.g. for the algebra of all bounded operators on an infinite-dimensional Hilbert space.
\end{rem}

We will also require the following proposition on pencils that is a combination of various existing results.

\begin{prop}[{\cite{Coh,KVV2,DM}}]\label{p:others}
Let $M$ be an affine pencil of size $e$. The following are equivalent:
\begin{enumerate}[(i)]
	\item $M\in\GL_e(\rx)$;
	\item there are $n\in\N$ and $X\in\mat{n}^d$ such that $\det M(X)\neq0$;
	\item for every $n\ge e-1$ there is $X\in\mat{n}^d$ such that $\det M(X)\neq0$;
	\item if $U\in \mat{e'\times e}$, $V\in\mat{e\times e''}$ satisfy $UMV=0$, 
	then $\rk U+\rk V\le e$.
\end{enumerate}
\end{prop}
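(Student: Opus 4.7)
The plan is to prove the four-way equivalence via the cycle $(i)\Rightarrow(ii)\Rightarrow(iv)\Rightarrow(i)$ and then to address $(ii)\Leftrightarrow(iii)$ separately. The implication $(iii)\Rightarrow(ii)$ is immediate, while $(i)\Rightarrow(ii)$ follows from the definition of $\rx$: if $M\in\GL_e(\rx)$, then each entry of $M^{-1}$ admits a non-degenerate formal rational expression, and intersecting their matricial domains produces a nonempty Zariski open subset of some $\mat{n}^d$ on which $M$ evaluates to an invertible matrix.

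For $(ii)\Rightarrow(iv)$ I argue by contrapositive. Suppose $U\in\mat{e'\times e}$ and $V\in\mat{e\times e''}$ satisfy $UMV=0$ with $\rk U+\rk V>e$. Evaluating the pencil identity yields $(U\otimes I_n)M(X)(V\otimes I_n)=0$ for every $X\in\mat{n}^d$. If $M(X)$ were invertible, it would carry $\ran(V\otimes I_n)$, a subspace of dimension $n\rk V$, injectively into $\ker(U\otimes I_n)$, of dimension $n(e-\rk U)$, forcing $\rk U+\rk V\le e$ and contradicting the hypothesis.

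The substantive step is $(iv)\Rightarrow(i)$, which I would derive from Cohn's inner-rank criterion for the universal field of fractions of $\px$: a matrix over $\px$ is invertible in $\rx$ if and only if it is full, meaning it admits no factorization $M=AB$ with inner dimension $s<e$. For an affine pencil, fullness is in turn equivalent to $(iv)$; the key point, extracted from the cited references, is that one may exploit the linearity of $M$ in $x$ to replace a hypothetical non-full factorization by scalar witnesses $U,V$ with $\rk U+\rk V>e$, and conversely any such scalar witnesses yield an inner-rank drop of $M$ via a direct linear-algebraic construction on the coefficient matrices $M_0,\dots,M_d$.

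Finally, $(ii)\Rightarrow(iii)$ is the quantitative stability of invertibility in pencil size due to Derksen--Makam on matrix semi-invariants: once some invertible evaluation $M(X_0)$ exists, one may enlarge $X_0$ by auxiliary blocks to realize invertibility at any prescribed size $n\ge e-1$. I expect this sharp bound to be the main obstacle of the proposition, since the other implications reduce to formal manipulations or direct consequences of Cohn's inner-rank theory, whereas the threshold $n\ge e-1$ genuinely requires the machinery of semi-invariants of quivers.
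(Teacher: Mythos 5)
Your proposal is correct and rests on the same external pillars as the paper's proof, but you organize the implications differently. The paper proves three separate equivalences, each by citation: $(i)\Leftrightarrow(ii)$ from the construction of the free skew field via matrix evaluations in \cite{KVV2}, $(iv)\Leftrightarrow(i)$ from Cohn's inner-rank theory (\cite[Corollaries 4.5.9 and 6.3.6]{Coh}, using that $\px$ is a free ideal ring), and $(ii)\Leftrightarrow(iii)$ with the nontrivial direction from \cite[Theorem 1.8]{DM}. You instead run the cycle $(i)\Rightarrow(ii)\Rightarrow(iv)\Rightarrow(i)$ and handle $(iii)$ separately. What this buys you is a genuinely elementary proof of $(ii)\Rightarrow(iv)$: the dimension count $n\,\rk V = \dim M(X)\,\ran(V\otimes I_n)\le\dim\ker(U\otimes I_n)=n(e-\rk U)$ is correct and replaces one invocation of Cohn with two lines of linear algebra. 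The heavy lifting in both versions is the same: $(iv)\Rightarrow(i)$ still needs Cohn's result that for a \emph{linear} pencil any witness of non-fullness can be taken constant, and $(ii)\Rightarrow(iii)$ still needs the Derksen--Makam degree bound. Your heuristic for the latter (``enlarge $X_0$ by auxiliary blocks'') doesn't really describe what \cite[Theorem 1.8]{DM} does — that theorem is a degree bound for matrix semi-invariants giving full rank at every size $\ge e-1$ directly, not a block-extension construction — but since you cite the right source this is only a slight misstatement of the mechanism, not a gap. One small point to tighten in $(i)\Rightarrow(ii)$: you need that the $e^2$ entries of $M^{-1}$ share a common domain point at a single matrix size, which is standard (and is the content the paper silently packages in its citation of \cite[Proposition 2.1]{KVV2}) but is not automatic from each domain being individually nonempty.
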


\begin{proof}
(i)$\Leftrightarrow$(ii) follows by the construction of the free skew field via matrix evaluations (cf. \cite[Proposition 2.1]{KVV2}). (iii)$\Rightarrow$(ii) is trivial and (ii)$\Rightarrow$(iii) holds by \cite[Theorem 1.8]{DM}.
(iv)$\Leftrightarrow$(i) follows from \cite[Corollaries 4.5.9 and 6.3.6]{Coh} since the free algebra $\px$ is a free ideal ring \cite[Theorem 5.4.1]{Coh}.
\end{proof}

An affine matrix pencil is {\bf full} \cite[Section 1.4]{Coh} if it satisfies the (equivalent) properties in Proposition \ref{p:others}.

\begin{rem}\label{r:dom}
If $r\in\rex$ admits a linear representation of size $e$, then $\hdom_n r\neq\emptyset$ for $n\ge e-1$ by Proposition \ref{p:others} and the Zariski denseness of $\hdom_n r$ in $\dom_n r$.
\end{rem}

\section{An extension theorem}\label{s3}

An affine matrix pencil $M$ of size $e$ is {\bf irreducible} if $UMV=0$ for nonzero matrices $U\in \mat{e'\times e}$ and $V\in\mat{e\times e''}$ implies $\rk U+\rk V\le e-1$. In other words, a pencil is not irreducible if
it can be put into a $2\times2$ block upper triangular form with square diagonal blocks
$(\begin{smallmatrix}\star & \star \\ 0 & \star\end{smallmatrix})$ 
by a left and a right basis change.
Every irreducible pencil is full. On the other hand, every full pencil is, up to a left and a right basis change, equal to a block upper-triangular pencil whose diagonal blocks are irreducible pencils. 
In terms of quiver representations \cite{Kin}, $M=M_0+\sum_{j=1}^dM_jx_j$ is full/irreducible if and only if the $(e,e)$-dimensional representation $(M_0,M_1,\dots,M_d)$ of the $(d+1)$-Kronecker quiver is $(1,-1)$-semistable/stable.

For the purpose of this section we extend evaluations of linear matrix pencils to tuples of rectangular matrices.
If $\Lambda=\sum_{j=1}^d \Lambda_jx_j$ is of size $e$ and $X\in\mat{\ell \times m}^d$ then
$$\Lambda(X)=\sum_{j=1}^d \Lambda_j\otimes X_j \in \mat{e\ell\times em}.$$

The following lemma and proposition rely on an ampliation trick in a free algebra to demonstrate the existence of specific invertible evaluations of full pencils (see \cite[Section 2.1]{HKV} for another argument involving such ampliations).

\begin{lem}\label{l:irr}
Let $\Lambda = \sum_{j=1}^d\Lambda_jx_j$ be a homogeneous irreducible pencil of size $e$. Let $\ell\le m$ and denote $n=(m-\ell)(e-1)$. Given $C\in\mat{me\times \ell e}$, consider the pencil $\widetilde{\Lambda}$ of size $(m+n)e$ in $d(m+n)(n+m-\ell)$ variables $z_{jpq}$,
$$\widetilde{\Lambda} = 
\begin{pmatrix}
C & 0 \\ 0 & 0
\end{pmatrix}
+\sum_{j=1}^d\sum_{q=1}^{n+m-\ell}\left(
\sum_{p=1}^m
\begin{pmatrix}
0 & \widehat{E}_{p,q}\otimes \Lambda_j \\ 0 & 0
\end{pmatrix}z_{jpq}
+\sum_{p=m+1}^{m+n}
\begin{pmatrix}
0 & 0 \\ 0 & \widecheck{E}_{p-m,q}\otimes \Lambda_j
\end{pmatrix}z_{jpq}
\right)
$$
where $\widehat{E}_{p,q} \in\mat{m\times (n+m-\ell)}$ and $\widecheck{E}_{p-m,q} \in\mat{n\times (n+m-\ell)}$ are the standard matrix units. If $C$ has full rank, then the pencil $\widetilde{\Lambda}$ is full.
\end{lem}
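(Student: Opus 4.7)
The plan is to verify condition (iv) of Proposition~\ref{p:others} for $\widetilde{\Lambda}$, regarded as an affine pencil in the scalar variables $z_{jpq}$. Suppose $U\in\mat{e'\times(m+n)e}$ and $V\in\mat{(m+n)e\times e''}$ satisfy $U\widetilde{\Lambda}V=0$ identically in the $z_{jpq}$; I must show $\rk U+\rk V\le(m+n)e$. Split $U=[U_1\ U_2]$ with $U_1$ of width $me$ and $U_2$ of width $ne$, and $V=\binom{V_1}{V_2}$ with $V_1$ of height $\ell e$ and $V_2$ of height $(n+m-\ell)e$, matching the block structure of $\widetilde{\Lambda}$. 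Further partition $[U_1\ U_2]$ into $m+n$ column blocks of width $e$, writing $W^{(p)}$ for the $p$-th such block, and partition $V_2$ into $n+m-\ell$ row blocks of height $e$, writing $V_2^{(q)}$ for the $q$-th such block.

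Equating the constant term of $U\widetilde{\Lambda}V$ to zero gives $U_1CV_1=0$, and equating the coefficient of each $z_{jpq}$ to zero gives $W^{(p)}\Lambda_j V_2^{(q)}=0$ for every $j$, every $p=1,\dots,m+n$, and every $q=1,\dots,n+m-\ell$. The latter says $W^{(p)}\Lambda V_2^{(q)}=0$ identically in $x$ for each $(p,q)$, so the irreducibility of $\Lambda$ yields
\[
\rk W^{(p)}+\rk V_2^{(q)}\le e-1 \qquad\text{for all }(p,q).
\]
Setting $a=\max_p\rk W^{(p)}$ and $b=\max_q\rk V_2^{(q)}$, this forces $a+b\le e-1$. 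Separately, since $C\in\mat{me\times\ell e}$ has full column rank $\ell e$, the map $V_1\mapsto CV_1$ is injective, so $U_1CV_1=0$ forces $\rk V_1=\rk(CV_1)\le\dim\ker U_1=me-\rk U_1$, i.e., $\rk U_1+\rk V_1\le me$.

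Combining via subadditivity of rank across the block partitions yields
\[
\rk U+\rk V\le(\rk U_1+\rk V_1)+(\rk U_2+\rk V_2)\le me+na+(n+m-\ell)b,
\]
using $\rk U_2\le\sum_{p>m}\rk W^{(p)}\le na$ and $\rk V_2\le\sum_q\rk V_2^{(q)}\le(n+m-\ell)b$. Optimizing $na+(n+m-\ell)b$ linearly over $\{a,b\ge0:\,a+b\le e-1\}$, the maximum (since $n+m-\ell\ge n$) is attained at $(a,b)=(0,e-1)$ and equals $(n+m-\ell)(e-1)$; using the identities $n+m-\ell=(m-\ell)e$ and $n=(m-\ell)(e-1)$, this equals $ne$. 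Hence $\rk U+\rk V\le me+ne=(m+n)e$, verifying (iv) and proving that $\widetilde{\Lambda}$ is full. The main obstacle is precisely this balance: the choice $n=(m-\ell)(e-1)$ is what makes the two block bounds combine to produce exactly $(m+n)e$, so the proof depends crucially on this dimensional identity. The degenerate case $\ell=m$, in which $n=0$, $V_2$ and $U_2$ are empty, and $\widetilde{\Lambda}$ reduces to the invertible square matrix $C$, is handled as a trivial instance.
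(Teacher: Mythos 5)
Your approach is essentially the same as the paper's: verify condition (iv) of Proposition~\ref{p:others} by decomposing $U$ and $V$ into blocks, using the full rank of $C$ for the constant term and the irreducibility of $\Lambda$ for the linear terms. The bookkeeping differs slightly (you optimize over the maxima $a,b$ rather than pairing the $p$-blocks of $U$ with $q$-blocks of $V_2$ as the paper does), but both come down to the same numerical identity $ne=(n+m-\ell)(e-1)$.

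There is, however, a small but genuine gap. The assertion ``the irreducibility of $\Lambda$ yields $\rk W^{(p)}+\rk V_2^{(q)}\le e-1$ for all $(p,q)$'' is not justified as written: the definition of irreducibility only constrains ranks when both matrices are \emph{nonzero}. If some $W^{(p)}=0$ and some $V_2^{(q)}$ is full rank (or vice versa), the inequality simply fails. The clean way to get it is to stack $U'=\bigl(W^{(1)\top}\ \cdots\ W^{(m+n)\top}\bigr)^{\!\top}$ and $V'=\bigl(V_2^{(1)}\ \cdots\ V_2^{(n+m-\ell)}\bigr)$, observe $U'\Lambda V'=0$, and apply irreducibility to $U',V'$ — but this requires $U'\neq0$ and $V'\neq0$, i.e.\ $U\neq0$ and $V_2\neq0$. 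The paper sidesteps the first degeneracy by dismissing $U=0$ at the outset; you never do. This matters for your argument: if $U=0$ then $a=0$ and the constraint $a+b\le e-1$ gives you nothing, so your optimization bound $na+(n+m-\ell)b\le(n+m-\ell)(e-1)=ne$ is not established — indeed $(n+m-\ell)b$ can reach $(m-\ell)e^{2}>ne$ for $e>1$, so the chain of inequalities you wrote does \emph{not} close. (The case $V_2=0$, by contrast, happens to be harmless because $na\le ne$ automatically.) The fix is trivial — when $U=0$ the inequality $\rk U+\rk V\le(m+n)e$ holds vacuously — but you need to say it; and you should justify the rank inequality for the remaining case via the stacking argument rather than asserting it for every pair $(p,q)$.
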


\begin{proof}
Suppose $U$ and $V$ are constant matrices with $e(m+n)$ columns and $e(m+n)$ rows, respectively, that satisfy $U\widetilde{\Lambda}V=0$. There is nothing to prove if $U=0$, so let $U\neq0$. Write
$$U=\begin{pmatrix} U_1 & \cdots & U_{m+n}\end{pmatrix},
\qquad
V=\begin{pmatrix} V_0 \\ V_1 \\ \vdots \\ V_{n+m-\ell}\end{pmatrix}$$
where each $U_p$ has $e$ columns, $V_0$ has $\ell e$ rows, and each $V_q$ with $q>0$ has $e$ rows. Also let $U_0=(U_1 \ \cdots \ U_m)$. Then $U\widetilde{\Lambda}V=0$ implies
\begin{align}
\label{e1} U_0CV_0 & = 0, \\
\label{e2} U_p\Lambda V_q & = 0 \qquad \text{for}\quad 1\le p\le m+n,\ 1\le q\le n+m-\ell.
\end{align}
Since $C$ has full rank, \eqref{e1} implies $\rk U_0+\rk V_0\le me$. Note that $U_{p'}\neq0$ for some $1\le p'\le m+n$ because $U\neq0$. Since $\Lambda$ is irreducible and $U_{p'}\neq 0$ for some $p'$, \eqref{e2} implies $\rk V_q\le e-1$ and $\rk U_p+\rk V_q\le e-1$ for all $p,q>0$. Then
\begin{align*}
\rk U+\rk V
& \le \rk U_0+\rk V_0+\sum_{p=m+1}^{m+n} \rk U_p+\sum_{q=1}^{n+m-\ell}\rk V_q \\
& \le me+n(e-1)+(m-\ell)(e-1) \\
& = (m+n)e 
\end{align*}
by the choice of $n$. Therefore $\widetilde{\Lambda}$ is full.
\end{proof}

\begin{prop}\label{p:side}
Let $\Lambda$ be a homogeneous full pencil of size $e$, and let $X\in\mat{m\times \ell}^d$ with $\ell\le m$ be such that $\Lambda(X)$ has full rank. Then there exist $\widehat{X}\in \mat{m\times (n+m-\ell)}^d$ and $\widecheck{X}\in \mat{n\times (n+m-\ell)}^d$ for some $n\in\N$ such that
\begin{equation}\label{e:side}
\det \Lambda
\begin{pmatrix}
X & \widehat{X} \\ 0 & \widecheck{X}
\end{pmatrix}
\neq0.
\end{equation}
\end{prop}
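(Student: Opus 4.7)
My plan is to derive Proposition \ref{p:side} from Lemma \ref{l:irr} in two steps: first reduce the general full pencil to the irreducible case, then translate the fullness of the auxiliary pencil $\widetilde{\Lambda}$ into an invertible evaluation of $\Lambda$. Using the structure statement recorded at the start of Section \ref{s3}, after replacing $\Lambda$ by $P\Lambda Q$ for suitable constant invertibles $P,Q\in \gl{e}$ I may assume $\Lambda$ itself is block upper triangular with irreducible diagonal blocks $\Lambda^{(1)},\dots,\Lambda^{(k)}$ of sizes $e_1,\dots,e_k$. This replacement is harmless because $(P\Lambda Q)(Y)=(P\otimes I)\Lambda(Y)(Q\otimes I)$ for every matrix tuple $Y$, so both the full-column-rank hypothesis on $\Lambda(X)$ and the target invertibility are preserved. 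With $\Lambda$ in this form, a single permutation of rows and columns of $\Lambda(Y)$ exposes the $\Lambda^{(i)}(Y)$ on the diagonal, whence $\Lambda(X)$ has full column rank if and only if every $\Lambda^{(i)}(X)$ does, and $\Lambda$ is invertible at a block tuple $\begin{pmatrix}X & \widehat X\\ 0 & \widecheck X\end{pmatrix}$ if and only if each $\Lambda^{(i)}$ is. It therefore suffices to find one pair $(\widehat X,\widecheck X)$ that works simultaneously for all irreducible blocks.

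For this I would fix $n=(m-\ell)(e-1)$, which satisfies $n\ge(m-\ell)(e_i-1)$ for every $i$, and apply Lemma \ref{l:irr} to each $\Lambda^{(i)}$ with $C:=\Lambda^{(i)}(X)$ (of full rank by the previous step). The rank inequality at the end of the proof of Lemma \ref{l:irr} is monotone in $n$, so the conclusion that the auxiliary pencil $\widetilde{\Lambda^{(i)}}$ in the scalar variables $z_{jpq}$ is full remains valid for this uniform $n$. Fullness means $\det\widetilde{\Lambda^{(i)}}$ is a nonzero polynomial in $z$, hence the product $\prod_i\det\widetilde{\Lambda^{(i)}}$ is also nonzero, and a common scalar tuple $z$ exists at which every $\widetilde{\Lambda^{(i)}}(z)$ is invertible. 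Such a $z$ defines matrices $\widehat X_j\in \mat{m\times(n+m-\ell)}$ and $\widecheck X_j\in \mat{n\times(n+m-\ell)}$ by $(\widehat X_j)_{p,q}=z_{jpq}$ and $(\widecheck X_j)_{p-m,q}=z_{jpq}$.

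To finish, I would use $\sum_{p,q}z_{jpq}\widehat E_{p,q}\otimes\Lambda_j=\widehat X_j\otimes\Lambda_j$ (and the analogue for $\widecheck X_j$) together with the canonical Kronecker-swap permutation relating $A\otimes B$ and $B\otimes A$ to identify $\widetilde{\Lambda^{(i)}}(z)$, up to row/column permutation, with the block matrix
$$\begin{pmatrix}\Lambda^{(i)}(X) & \Lambda^{(i)}(\widehat X)\\ 0 & \Lambda^{(i)}(\widecheck X)\end{pmatrix},$$
and this, after one further permutation, equals $\Lambda^{(i)}$ evaluated at $\begin{pmatrix}X & \widehat X\\ 0 & \widecheck X\end{pmatrix}$. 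Each of the latter evaluations is then invertible, and by the first step so is $\Lambda$ at the same block tuple, establishing \eqref{e:side}. The main obstacle is precisely this last bookkeeping: Lemma \ref{l:irr} places $\widehat E_{p,q}\otimes\Lambda_j$ (pencil coefficient on the right of the Kronecker product), whereas the statement of Proposition \ref{p:side} uses the convention $\Lambda(Y)=\sum_j\Lambda_j\otimes Y_j$ (pencil coefficient on the left), so the proof must carefully verify that the two block pencils coincide up to permutation conjugation. The rest is a routine combination of the structure theorem for full pencils with a Zariski-density argument.
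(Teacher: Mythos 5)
Your plan correctly identifies the two moving parts (reduce to irreducible diagonal blocks, then deduce an invertible evaluation from Lemma~\ref{l:irr}), but the crucial step breaks down at the sentence ``Fullness means $\det\widetilde{\Lambda^{(i)}}$ is a nonzero polynomial in $z$.'' Fullness of a pencil, in the sense of Proposition~\ref{p:others}, says that there are \emph{matrix} tuples at which the pencil is invertible; it does \emph{not} say that its determinant is a nonzero commutative polynomial, i.e.\ that some \emph{scalar} tuple makes it invertible. These are the distinct notions of noncommutative rank versus commutative rank, and they genuinely differ: the generic $3\times3$ skew-symmetric pencil $\Lambda=\sum_j\Lambda_jx_j$ with $\Lambda_j^\top=-\Lambda_j$ is full, yet $\det\Lambda(z)\equiv0$ for scalar $z$. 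Since a scalar evaluation of $\widetilde{\Lambda}$ is, after a Kronecker-swap permutation, precisely $\Lambda$ evaluated at a constrained block tuple $\begin{pmatrix}X&\widehat X\\0&\widecheck X\end{pmatrix}$, your claim that such a scalar point exists is exactly the statement of Proposition~\ref{p:side} that you are trying to prove; deducing it from fullness of $\widetilde\Lambda$ alone is circular. The same flaw also undermines the step where you take a common zero of $\prod_i\det\widetilde{\Lambda^{(i)}}$, since each factor might vanish identically on scalars.

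The paper avoids this by exploiting Proposition~\ref{p:others}(iii): it evaluates $\widetilde{\Lambda}$ at a tuple of \emph{matrices} $Z$ of size $e_1-1$ (where $e_1$ is the size of $\widetilde\Lambda$), obtaining an invertible matrix. After conjugating by a permutation that swaps the Kronecker factors, this matrix is recognized as $\Lambda$ evaluated at a block upper-triangular tuple whose $(1,1)$-block is (up to a further permutation) $X$ and whose $(2,1)$-block is $0$, but whose side blocks $\widehat X,\widecheck X$ are now of much larger size --- namely $n=(e_1-2)m+n_1(e_1-1)$ rather than your proposed $n=(m-\ell)(e-1)$. That ampliation is the essential device your argument is missing. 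The reduction to the irreducible case is also handled slightly differently in the paper (via Zariski openness of the set of admissible pairs $(\widehat X,\widecheck X)$ rather than by intersecting nonvanishing loci), but your version of that step would be fine if the scalar-determinant claim were correct; the real gap is the conflation of fullness with commutative full rank.
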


\begin{proof}
A full pencil is up to a left-right basis change equal to a block upper-triangular pencil with irreducible diagonal blocks. Suppose that the lemma holds for irreducible pencils; since the set of pairs
$(\widehat{X},\widecheck{X})\in \mat{m\times (n+m-\ell)}^d\times \mat{n\times (n+m-\ell)}^d$ satisfying \eqref{e:side} is Zariski open, the lemma then also holds for full pencils. Thus we can without loss of generality assume that $\Lambda$ is irreducible.

Let $n_1=(m-\ell)(e-1)$ and $e_1=(m+n_1)e$. By Lemma \ref{l:irr} applied to $C=\sum_{j=1}^d X_j\otimes \Lambda_j$ and Proposition \ref{p:others}, there exists
$Z \in\mat{e_1-1}^{d(m+n_1)(n_1+m-\ell)}$ such that $\widetilde{\Lambda}(Z)$ is invertible. Therefore the matrix
$$
\sum_{j=1}^d\Lambda_j\otimes\left(
\left(\begin{smallmatrix}I\otimes X_j & 0 \\ 0 & 0\end{smallmatrix}\right)
+\sum_{q=1}^{n_1+m-\ell}\left(\sum_{p=1}^{n_1}
\left(\begin{smallmatrix}
0 & Z_{jpq}\otimes \widehat{E}_{p,q} \\ 0 & 0
\end{smallmatrix}\right)
+\sum_{p=m+1}^{m+n_1}
\left(\begin{smallmatrix}
0 & 0 \\ 0 & Z_{jpq}\otimes \widecheck{E}_{p-m,q}
\end{smallmatrix}\right)
\right)\right)
$$
is invertible since it is similar to $\widetilde{\Lambda}(Z)$ (via a permutation matrix).
Thus there are $\widehat{X}\in \mat{m\times (n+m-\ell)}^d$ and $\widecheck{X}\in \mat{n\times (n+m-\ell)}^d$ such that
$$\det \Lambda
\begin{pmatrix}
	X & \widehat{X} \\ 0 & \widecheck{X}
\end{pmatrix}
\neq0$$
where $n=(e_1-2)m+n_1(e_1-1)$. 
\end{proof}

We are ready to prove the first main result of the paper.

\begin{thm}\label{t:nonherm}
Let $\Lambda$ be a full pencil of size $e$, and let $Y\in\mat{\ell}^d$, $Y'\in\mat{m\times \ell}^d$, $Y''\in\mat{\ell\times m}^d$ be such that
\begin{equation}\label{e:rk}
\Lambda \begin{pmatrix} Y \\ Y'\end{pmatrix},\qquad
\Lambda \begin{pmatrix} Y & Y''\end{pmatrix}
\end{equation}
have full rank. Then there are $n\ge m$ and $Z\in\mat{n}^d$
such that
$$
\det \Lambda 
\left(\begin{array}{cc}
Y & \begin{matrix} Y'' & 0\end{matrix} \\
\begin{matrix} Y' \\ 0\end{matrix} & \mathlarger{Z}
\end{array}\right)
\neq0.
$$
\end{thm}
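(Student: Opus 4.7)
The plan is to extend the ampliation argument behind Lemma~\ref{l:irr} and Proposition~\ref{p:side} to handle simultaneous row and column data.

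First, reduce to the case that $\Lambda$ is irreducible: every full pencil is, via a left--right basis change, block upper triangular with irreducible diagonal blocks, and such a change preserves both hypotheses (a block upper triangular matrix has full row/column rank iff each diagonal block does) and the target invertibility (since $\Lambda(W)$ inherits the same block triangular shape). Zariski openness of the set of $Z$ making each diagonal block's evaluation invertible then lets one pass to a common $n$.

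Now assume $\Lambda$ is irreducible of size $e$. Introduce $dn^2$ freely noncommuting variables $z=(z_{jpq})$ representing the entries of an $n\times n$ bottom right block and form the ampliated pencil
$$\widetilde{L}\;=\;\Lambda\!\left(\begin{array}{cc} Y & \begin{matrix} Y'' & 0\end{matrix} \\ \begin{matrix} Y' \\ 0\end{matrix} & z\end{array}\right)$$
of size $e(\ell+n)$. If $\widetilde{L}$ is full as a pencil in $z$, then by Proposition~\ref{p:others} it admits an invertible evaluation at some tuple of matrices; a Kronecker/permutation manipulation exactly as in the last paragraph of the proof of Proposition~\ref{p:side} repackages this as $\Lambda(W)$ being invertible at a tuple $W$ of the form required by the theorem.

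For the fullness of $\widetilde{L}$, I would take constant matrices $U,V$ with $U\widetilde{L}V=0$ and separate the polynomial identity into the constant term and the coefficients of the $z$-variables, obtaining (a)~$UCV=0$, where $C$ is the constant part of $\widetilde{L}$, and (b)~$U^{(\beta)}\Lambda V^{(\beta')}=0$ for every $\beta,\beta'\in\{\ell+1,\ldots,\ell+n\}$, where $U^{(\beta)}$ and $V^{(\beta')}$ denote the $e$-column and $e$-row blocks of $U,V$ at outer position $\beta$. Irreducibility of $\Lambda$ with Proposition~\ref{p:others}(iv) gives $\rk U^{(\beta)}+\rk V^{(\beta')}\le e-1$ whenever both blocks are nonzero. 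The argument splits into three cases. In the main case, where some $U^{(\beta)}\ne 0$ and some $V^{(\beta')}\ne 0$ with $\beta,\beta'>\ell$, letting $R=\max_{\beta>\ell}\rk U^{(\beta)}$ and $S$ similarly gives $R+S\le e-1$, and a column-block count yields $\rk U\le e\ell+nR$ and $\rk V\le e\ell+nS$, hence $\rk U+\rk V\le 2e\ell+n(e-1)\le e(\ell+n)$ as soon as $n\ge e\ell$. In the degenerate case with all $U^{(\beta)}=0$ for $\beta>\ell$, equation (a) collapses to $U_1\cdot\Lambda(Y,Y'')\cdot V_{\le\ell+m}=0$, and the full row rank of $\Lambda(Y,Y'')$ forces, by a dimension count on preimages of the kernel of $U_1$, the bound $\rk V\le e(\ell+n)-\rk U$; the symmetric case uses the full column rank of $\Lambda\begin{pmatrix}Y\\Y'\end{pmatrix}$.

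The main obstacle is this case analysis: the degenerate cases are exactly where the full-rank hypotheses on $Y',Y''$ must enter (as the trivial counterexample $Y=Y'=Y''=0$ shows), while the irreducibility of $\Lambda$ is essential in the main case to sharpen the rank bound from $e$ to $e-1$. Taking $n\ge\max(m,e\ell)$ then gives fullness of $\widetilde{L}$, and the theorem follows from Proposition~\ref{p:others} together with the Kronecker/permutation repackaging.
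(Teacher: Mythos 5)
Your argument is correct, and it takes a genuinely different route from the paper. The paper proves this theorem by invoking Proposition~\ref{p:side} twice (once directly and once in transposed form) to get invertible ``tall'' and ``wide'' extensions of the data, then combines them through an $\ve$-perturbation: a suitable block matrix built from the two invertible pieces is shown to be invertible for small $\ve$, homogeneity of $\Lambda$ then allows $\ve=1$, and elementary row/column operations reshape the result into the required block form with an explicit $Z$ and $n=2(m+k)$. You instead carry out a \emph{single two-sided ampliation} in the spirit of Lemma~\ref{l:irr}: form the pencil $\widetilde L$ in $dn^2$ fresh variables $z_{jpq}$ sitting in the free bottom-right block, prove fullness of $\widetilde L$ by splitting $U\widetilde L V=0$ into the constant identity (a) and the $z$-coefficient identities (b), and then apply Proposition~\ref{p:others}(iii) once, followed by the Kronecker permutation as in Proposition~\ref{p:side}. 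Your fullness case analysis is sound: in the main case irreducibility gives $R+S\le e-1$ and $n\ge e\ell$ closes the estimate; in the two degenerate cases (a) collapses to $U_1\Lambda(Y\,\,Y'')V_{\le\ell+m}=0$ (respectively its transpose), and the full row (resp.\ column) rank hypothesis then forces $\rk U+\rk V\le e(\ell+n)$. The repackaging also works, for the reason you need but didn't spell out: after the permutation, every ampliated extra copy of $Y,Y',Y''$ coming from $I_N\otimes(\begin{smallmatrix}Y&\widetilde Y''\\\widetilde Y'&0\end{smallmatrix})$ lands strictly inside the unconstrained $Z$-block (rows and columns past $\ell$), while the first $\ell$ rows/columns of the outer matrix receive exactly one copy of $Y$, $(Y''\ 0)$, and $(\begin{smallmatrix}Y'\\0\end{smallmatrix})$. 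The only place that deserves tightening is the reduction to irreducible $\Lambda$: ``Zariski openness lets one pass to a common $n$'' is not automatic, since nonemptiness of the invertibility locus at size $n_i$ does not obviously propagate to all larger $n$. The cleanest fix is to skip the pointwise reduction entirely: $\widetilde L$ for a full $\Lambda$ is (after a left-right basis change) block upper triangular with diagonal blocks $\widetilde L_i$ built from the irreducible summands, each of which your case analysis shows to be full with the \emph{same} $n\ge\max(m,e\ell)$; a block upper triangular square pencil with full square diagonal blocks is full, so Proposition~\ref{p:others}(iii) applies to $\widetilde L$ directly. Compared with the paper, your argument is more symmetric and avoids the explicit gluing, at the cost of a somewhat larger (and less explicit) bound on $n$ than the one recorded in Remark~\ref{r:size}.
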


\begin{proof}
By Proposition \ref{p:side} and its transpose analog there exist $k\in\N$ and
\begin{align*}
A'\in \mat{\ell\times (k+m-\ell)}^d,\quad 
B'\in \mat{m\times (k+m-\ell)}^d,\quad
C'\in \mat{k\times (k+m-\ell)}^d, \\
A''\in \mat{(k+m-\ell)\times \ell}^d,\quad 
B''\in \mat{(k+m-\ell)\times m}^d,\quad
C''\in \mat{(k+m-\ell)\times k}^d \\
\end{align*}
such that the matrices
$$\Lambda
\begin{pmatrix}
Y & A' \\ Y' & B' \\ 0 & C'
\end{pmatrix}
,\qquad
\Lambda
\begin{pmatrix}
Y & Y'' &0 \\ A'' & B'' & C''
\end{pmatrix}
$$
are invertible. Consequently there exists $\ve\in\C\setminus\{0\}$ such that
$$\begin{pmatrix}
\Lambda(Y) & 0 & 0 &0 & 0 \\
0 & 0 & 0 &0 & 0 \\
0 & 0 & 0 &0 & 0 \\
0 & 0 & 0 &0 & 0 \\
0 & 0 & 0 &0 & 0
\end{pmatrix}
+\ve\begin{pmatrix}
\begin{pmatrix}0 & 0 \\ 0 &0 \end{pmatrix} &
\Lambda
\begin{pmatrix}
Y & Y'' &0 \\ A'' & B'' & C''
\end{pmatrix}
\\
\Lambda
\begin{pmatrix}
Y & A' \\ Y' & B' \\ 0 & C'
\end{pmatrix}
&
\begin{pmatrix}0 & 0 & 0\\ 0 &0 &0\\ 0 &0 &0\end{pmatrix}
\end{pmatrix}$$
is invertible; this matrix is similar to
\begin{equation}\label{e:eps}
\Lambda
 \begin{pmatrix}
 Y & 0 & \ve Y & \ve Y'' &0 \\
 0 & 0 & \ve A''& \ve B'' & \ve C'' \\
 \ve Y  & \ve A' & 0 &0 &0 \\
 \ve Y' & \ve B' & 0 &0 &0 \\
 0 & \ve C' & 0 &0 &0
 \end{pmatrix}.
\end{equation}
Thus the matrix \eqref{e:eps} is invertible; its block structure and the linearity of $\Lambda$ imply that \eqref{e:eps} is invertible for every $\ve\neq0$, so we can choose $\ve=1$. After performing elementary row and column operations on \eqref{e:eps} we conclude that
\begin{equation}\label{e:mat5}
\Lambda
 \begin{pmatrix}
Y & Y'' & 0 & 0 &0 \\
 Y' & 0 & - Y' & B' &0 \\
0  & - Y'' & - Y &  A' &0 \\
0 & B'' &  A''& 0 & C'' \\
0 & 0 & 0 & C' &0
\end{pmatrix}
\end{equation}
is invertible. So the lemma holds for $n= 2(m+k)$.
\end{proof}

\begin{rem}\label{r:size}
It follows from the proofs of Proposition \ref{p:side} and Theorem \ref{t:nonherm} that one can choose
$$n= 2 \left(e^3 m^2+e m (2 e \ell-1)+\ell (e \ell-2)\right)$$
in Theorem \ref{t:nonherm}. However, this is unlikely to be the minimal choice for $n$.
\end{rem}

Let $\cfm$ be the algebra of $\N\times\N$ matrices over $\C$ that have only finitely many nonzero entries in each column; that is, elements of $\cfm$ can be viewed as operators on $\oplus^{\N}\C$. Given $r\in\rex$ let $\dom_\infty r$ be the set of tuples $X\in\cfm^d$ such that $r(X)$ is well-defined. If $(u,M,v)$ is a linear representation of $r$ of size $e$, then $M(X)\in\opm_e(\cfm)$ is invertible for every $X\in\dom_\infty r$ by the definition of a linear representation adopted in this paper.

\begin{prop}\label{p:main}
Let $r\in\rex$. If $X\in\her{\ell}^d$ and $Y\in\mat{m\times \ell}^d$ are such that
$$\begin{pmatrix}
X & \begin{matrix} Y^* & 0\end{matrix} \\
\begin{matrix} Y \\ 0\end{matrix} & \mathlarger{W}
\end{pmatrix} 
\in \dom_\infty r
$$
for some $W\in\cfm^d$, then there exist $n\ge m$, $E\in\mat{n}$ and $Z\in\her{n}^d$ such that
$$\begin{pmatrix}
X & \begin{pmatrix} Y^* & 0\end{pmatrix}E^* \\
E\begin{pmatrix} Y \\ 0\end{pmatrix} & \mathlarger{Z}
\end{pmatrix} \in \hdom r.
$$
\end{prop}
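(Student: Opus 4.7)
The strategy is threefold: reduce, via a linear representation of $r$, to invertibility of a homogeneous pencil at a suitable block tuple; apply Theorem \ref{t:nonherm} to extend the first-row/first-column data to an (a priori non-hermitian) invertible tuple; and pass to a hermitian extension by a density argument combined with a congruence that renormalizes the $x_0$-slot to the identity.

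Let $(u,M,v)$ be a linear representation of $r$ of size $e$. By hypothesis $M(A)$ is invertible in $\opm_e(\cfm)$, where $A$ denotes the given infinite block tuple. Since $r$ is non-degenerate $M$ is a full pencil, and hence so is its homogenization $\Lambda:=M_0x_0+\sum_{j=1}^d M_jx_j$, a homogeneous pencil of size $e$ in $d+1$ variables. Using the block form of $A$, the first $\ell$ block columns (resp.\ block rows) of the injective operator $M(A)$ have nonzero entries only in the first $\ell+m$ block rows (resp.\ columns), and writing them out one finds they equal $\Lambda\binom{\mathbf Y}{\mathbf Y'}$ (resp.\ $\Lambda(\mathbf Y,\mathbf Y'')$) with
\[
\mathbf Y=(I_\ell,X)\in\mat{\ell}^{d+1},\quad \mathbf Y'=(0,Y)\in\mat{m\times\ell}^{d+1},\quad \mathbf Y''=(0,Y^*)\in\mat{\ell\times m}^{d+1}.
\]
These matrices therefore have full column, resp.\ row, rank. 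Theorem \ref{t:nonherm} applied to $\Lambda$ with this data yields $n\ge m$ and $\mathbf Z\in\mat{n}^{d+1}$ such that $\Lambda(\widetilde T(\mathbf Z))$ is invertible, where
\[
\widetilde T_0=I_\ell\oplus\mathbf Z_0,\qquad \widetilde T_j=\begin{pmatrix} X_j & (Y_j^*,0)\\ \binom{Y_j}{0} & \mathbf Z_j\end{pmatrix}\quad(j\ge 1).
\]

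The crucial step is to upgrade this existence to a hermitian $\mathbf Z$ with $\mathbf Z_0\succ 0$. The polynomial $p(\mathbf Z):=\det\Lambda(\widetilde T(\mathbf Z))$ is a nonzero complex polynomial on $\mat{n}^{d+1}$; since $\her{n}^{d+1}$ is a real form of $\mat{n}^{d+1}$, its restriction $p|_{\her{n}^{d+1}}$ is still not identically zero. Consequently $\{p\ne 0\}\cap\her{n}^{d+1}$ is a Euclidean-open dense subset of $\her{n}^{d+1}$, and meets the nonempty open set $\{\mathbf Z_0\succ 0\}\times\her{n}^d$. Fix such a hermitian $\mathbf Z$ and set $G:=\mathbf Z_0^{-1/2}\in\her{n}$, $F:=I_\ell\oplus G\in\her{\ell+n}$. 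Conjugation by $I_e\otimes F$ preserves invertibility, and $F\widetilde T_0 F=I_{\ell+n}$, so $\Lambda(F\widetilde T F)=M\bigl((F\widetilde T_j F)_{j\ge 1}\bigr)$ is invertible. A direct computation gives
\[
F\widetilde T_j F=\begin{pmatrix} X_j & (Y_j^*,0)G\\ G\binom{Y_j}{0} & G\mathbf Z_j G\end{pmatrix}\quad (j\ge 1),
\]
which is hermitian because $G$ and $\mathbf Z_j$ are. Setting $E:=G\in\mat{n}$ and $Z_j:=G\mathbf Z_j G\in\her{n}$ matches the form in the statement and places $(F\widetilde T_j F)_{j\ge 1}$ in $\hdom_{\ell+n} r$.

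The principal obstacle is the density step that refines Theorem \ref{t:nonherm}'s conclusion to a hermitian positive-definite choice of $\mathbf Z$: one must know both that the Zariski-open condition from Theorem \ref{t:nonherm} descends nontrivially to the real form $\her{n}^{d+1}$, and that this open dense set meets the positive-definite cone. The remainder is a congruence verification.
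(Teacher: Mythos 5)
Your proof is correct and follows essentially the same route as the paper's: homogenize the pencil from a linear representation of $r$, apply Theorem~\ref{t:nonherm} to obtain a non-hermitian invertible extension, then use Zariski density of hermitian (and positive-definite) tuples together with a congruence by $I_e \otimes (I_\ell \oplus E)$ to normalize the $x_0$-slot to the identity. You are somewhat more explicit than the paper about the homogenization step and about why the nonvanishing locus meets $\her{n}^{d+1}$; the paper simply invokes Zariski density of positive definite matrices, and it defines $E$ implicitly via $Z_0'=E^{-1}E^{-*}$ where you take the hermitian square root $G=\mathbf Z_0^{-1/2}$ — a cosmetic difference.
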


\begin{proof}
Let $(u,M,v)$ be a linear representation of $r$ of size $e$.
By assumption,
$$M\begin{pmatrix}
X & \begin{matrix} Y^* & 0\end{matrix} \\
\begin{matrix} Y \\ 0\end{matrix} & \mathlarger{W}
\end{pmatrix} 
$$
is an invertible matrix over $\cfm$. If $M=M_0+M_1x_1+\cdots+M_dx_d$, then the matrices
$$M_0\otimes \begin{pmatrix}I \\ 0\end{pmatrix}+
\sum_{j=1}^d M_0\otimes \begin{pmatrix}X_j \\ Y_j\end{pmatrix},\qquad
M_0\otimes \begin{pmatrix}I & 0\end{pmatrix}+
\sum_{j=1}^d M_0\otimes \begin{pmatrix}X_j & Y_j^*\end{pmatrix}
$$
have full rank. Let $n\in\N$ be as in Theorem \ref{t:nonherm}. Then there is $Z'\in \mat{n}^{1+d}$ such that
\begin{equation}\label{e:2psd}
\det\left(M_0\otimes
\begin{pmatrix}
I & \begin{matrix} 0 & 0\end{matrix} \\
\begin{matrix} 0 \\ 0\end{matrix} & \mathlarger{Z'_0}
\end{pmatrix}+
\sum_{j=1}^d M_0\otimes 
\begin{pmatrix}
X_j & \begin{matrix} Y_j^* & 0\end{matrix} \\
\begin{matrix} Y_j \\ 0\end{matrix} & \mathlarger{Z'_j}
\end{pmatrix}
\right)\neq0
\end{equation}
is invertible. The set of all $Z'\in\mat{n}^{1+d}$ satisfying \eqref{e:2psd} is thus a nonempty Zariski open set in $\mat{n}^{1+d}$. Since the set of positive definite $n\times n$ matrices is Zariski dense in $\mat{n}$, there exists $Z'\in\her{n}^{1+d}$ with $Z'_0\succ0$ such that \eqref{e:2psd} holds. If $Z_0' = E^{-1}E^{-*}$, let $Z_j=EZ'_jE^*$ for $1\le j \le d$. Then
$$M\begin{pmatrix}
X & \begin{pmatrix} Y^* & 0\end{pmatrix}E^* \\
E\begin{pmatrix} Y \\ 0\end{pmatrix} & \mathlarger{Z}
\end{pmatrix}
$$
is invertible, so
$$
\begin{pmatrix}
X & \begin{pmatrix} Y^* & 0\end{pmatrix}E^* \\
E\begin{pmatrix} Y \\ 0\end{pmatrix} & \mathlarger{Z}
\end{pmatrix}  \in\hdom r 
$$
by the definition of a linear representation.
\end{proof}

We also record a non-hermitian version of Proposition \ref{p:main}.

\begin{prop}\label{p:main2}
Let $r\in\rex$. If $X\in\mat{m\times \ell}^d$ with $\ell\le m$ is such that
$$\begin{pmatrix}
\begin{matrix} X\\ 0\end{matrix} & \mathlarger{W}
\end{pmatrix} 
\in \dom_\infty r
$$
for some $W\in\cfm^d$, then there exist $n\ge m$ and $Z\in\mat{n\times (n-\ell)}^d$ such that
$$\begin{pmatrix}
\begin{matrix} X \\ 0\end{matrix} & \mathlarger{Z}
\end{pmatrix} \in \dom r.
$$
\end{prop}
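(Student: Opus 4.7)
The plan is to adapt the argument of Proposition \ref{p:main} to this asymmetric setting, replacing Theorem \ref{t:nonherm} by Proposition \ref{p:side} and inserting a short Zariski density step to normalize the extended constant slot back to the identity.

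First I would take a linear representation $(u,M,v)$ of $r$ of size $e$ with $M=M_0+\sum_{j=1}^dM_jx_j$. The hypothesis means $M\bigl(\begin{smallmatrix}X\\0\end{smallmatrix}\mid W\bigr)$ is invertible in $\opm_e(\cfm)$; restricting this operator to the first $e\ell$ input coordinates (which depend only on the $\bigl(\begin{smallmatrix}X\\0\end{smallmatrix}\bigr)$ block) and then deleting the forced zero output rows below row $em$ shows that
$$C:=M_0\otimes\begin{pmatrix}I_\ell\\0\end{pmatrix}+\sum_{j=1}^d M_j\otimes X_j\in\mat{em\times e\ell}$$
has full column rank $e\ell$, exactly as in the proof of Proposition \ref{p:main}. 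I would then homogenize $M$ with a fresh variable $x_0$: the pencil $\Lambda=M_0x_0+\sum_{j=1}^dM_jx_j$ is homogeneous and full (inheriting fullness from $M$), and satisfies $\Lambda(T^{(0)},X)=C$ with $T^{(0)}:=\bigl(\begin{smallmatrix}I_\ell\\0\end{smallmatrix}\bigr)\in\mat{m\times\ell}$. Proposition \ref{p:side} then yields $n'\in\N$ and block extensions $\widehat T^{(j)},\widecheck T^{(j)}$ ($j=0,\dots,d$) making $\Lambda(\tilde T^{(0)},\tilde X_1,\dots,\tilde X_d)$ invertible, where $\tilde T^{(0)}=\bigl(\begin{smallmatrix}T^{(0)}&\widehat T^{(0)}\\0&\widecheck T^{(0)}\end{smallmatrix}\bigr)$ and $\tilde X_j=\bigl(\begin{smallmatrix}X_j&\widehat X_j\\0&\widecheck X_j\end{smallmatrix}\bigr)$.

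The main obstacle is that in order to pass from invertibility of a $\Lambda$-evaluation back to one of $M$, one needs $\tilde T^{(0)}$ itself to be invertible, which Proposition \ref{p:side} does not guarantee. By the block structure $\tilde T^{(0)}=\bigl(\begin{smallmatrix}I_\ell&\ast\\0&Q\end{smallmatrix}\bigr)$, this reduces to invertibility of the square block $Q\in\mat{m+n'-\ell}$ assembled from the bottom $m-\ell$ rows of $\widehat T^{(0)}$ and from $\widecheck T^{(0)}$. I would arrange this by Zariski density: at the fixed $n'$ produced by Proposition \ref{p:side}, the set of tuples $(\widehat T^{(j)},\widecheck T^{(j)})_{j=0}^d$ for which $\Lambda$'s evaluation is invertible is a nonempty Zariski open subset of the ambient affine parameter space (nonempty by Proposition \ref{p:side}), while the set for which $Q$ is invertible is another nonempty Zariski open subset (take $Q=I$); by irreducibility of affine space these open sets must meet, and I pick one extension satisfying both conditions simultaneously.

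Finally, right-multiplying the invertible matrix $\Lambda(\tilde T^{(0)},\tilde X)$ by $I_e\otimes(\tilde T^{(0)})^{-1}$ yields $M\bigl(\tilde X(\tilde T^{(0)})^{-1}\bigr)$ invertible. Since the first $\ell$ columns of $(\tilde T^{(0)})^{-1}$ are $\bigl(\begin{smallmatrix}I_\ell\\0\end{smallmatrix}\bigr)$, the first $\ell$ columns of each $\tilde X_j(\tilde T^{(0)})^{-1}$ are $\bigl(\begin{smallmatrix}X_j\\0\end{smallmatrix}\bigr)$; setting $n=m+n'$ and letting $Z\in\mat{n\times(n-\ell)}^d$ be the tuple of the remaining $n-\ell$ columns, I obtain $\tilde X(\tilde T^{(0)})^{-1}=\bigl(\bigl(\begin{smallmatrix}X\\0\end{smallmatrix}\bigr)\mid Z\bigr)$, which therefore lies in $\dom r$ by property (ii) of the linear representation.
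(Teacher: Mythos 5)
Your proof is correct and follows essentially the approach the paper sketches: adapt the argument of Proposition \ref{p:main}, replacing Theorem \ref{t:nonherm} by Proposition \ref{p:side} and dropping the hermitian constraints. The Zariski-density step to make $\tilde T^{(0)}$ (equivalently its lower-right block $Q$) invertible, followed by right-multiplication by $I_e\otimes(\tilde T^{(0)})^{-1}$, is the natural non-hermitian counterpart of the positive-definiteness perturbation and conjugation-by-$E$ used in the proof of Proposition \ref{p:main}.
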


\begin{proof}
We apply a similar reasoning as in the proof of Proposition \ref{p:main}, only this time with Proposition \ref{p:side} instead of Theorem \ref{t:nonherm}, and without hermitian considerations.
\end{proof}

\section{Multiplication operators attached to a formal rational expression}\label{s4}

In this section we assign a tuple of operators  $\fX$ on a vector space of countable dimension to each formal rational expression $r$, so that $r$ is well-defined at $\fX$ and the finite-dimensional restrictions of $\fX$ partially retain a certain multiplicative property.

Fix an expression $r\in\rex$. Without loss of generality we assume that all the variables in $x$ appear as subexpressions in $r$ (otherwise we replace $x$ by a suitable sub-tuple). Let
$$R=\{1\}\cup \{q\in\rex\setminus\C\colon q \text{ is a subexpression of }r \text{ or }r^*  \}\subset \rex.$$
Note that $R$ is finite, $\hdom q\supseteq\hdom r$ for $q\in R$, and $q\in R$ implies $q^*\in R$. Let $\cR\subset\rx$ be the set of noncommutative rational functions represented by $R$. 
For $\ell\in\N$ we define finite-dimensional vector subspaces 
$$V_\ell = \spa_\C\overbrace{\cR\cdots \cR}^\ell\subset \rx.$$
Note that $V_\ell\subseteq V_{\ell+1}$ since $1\in R$.
Furthermore, let $V=\bigcup_{\ell\in\N} V_\ell$. Then $V$ is a finitely generated $*$-subalgebra of $\rx$. For $j=1,\dots,d$ we define operators
$$\fX_j:V\to V,\qquad \fX_j \rs = x_j\rs.$$

\begin{lem}\label{l:fun}
There is a linear functional $\phi:V\to\C$ such that $\phi(\rs^*)=\overline{\phi(\rs)}$ and $\phi(\rs\rs^*)>0$ for all $\rs\in V\setminus\{0\}$.
\end{lem}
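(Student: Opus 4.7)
The plan is to build $\phi$ as an absolutely convergent countable positive combination $\phi=\sum_{\ell\ge 1}c_\ell\phi_\ell$, where each $\phi_\ell$ is a trace-type functional coming from a hermitian matrix evaluation and is faithful on the finite-dimensional piece $V_\ell$ of the filtration $V_1\subseteq V_2\subseteq\cdots$.

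The main (and only delicate) step is to exhibit, for every $\ell\in\N$, an integer $n_\ell$ and a tuple $X_\ell\in\her{n_\ell}^d\cap\bigcap_{q\in R}\hdom_{n_\ell} q$ for which the evaluation map $V_\ell\to\mat{n_\ell}$, $\rs\mapsto\rs(X_\ell)$, is $\C$-linearly injective. Fix a basis $e_1,\dots,e_{k}$ of $V_\ell$; for each hermitian $X$ in the common domain $\hdom_n^R:=\her{n}^d\cap\bigcap_{q\in R}\hdom_n q$ let $K_X=\{a\in\C^k\colon \sum_i a_ie_i(X)=0\}$. If a nonzero $a$ belonged to every $K_X$ across all admissible $n$ and $X$, then $\sum_i a_ie_i\in\rx$ would vanish on the Zariski-dense subset $\hdom_n^R\subseteq\dom_n^R$ for all sufficiently large $n$ (using Remark \ref{r:dom} and Proposition \ref{p:others}), hence be zero in $\rx$, contradicting linear independence of the $e_i$. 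Since the $K_X$ all sit inside the finite-dimensional space $\C^k$, the descending chain condition gives finitely many tuples $X^{(1)},\dots,X^{(s)}$ whose kernels already intersect in $0$; the block-diagonal tuple $X_\ell:=X^{(1)}\oplus\cdots\oplus X^{(s)}$ is still in $\hdom q$ for every $q\in R$ (rational expressions evaluate block-diagonally on block-diagonal tuples in their domain), and $\sum_i a_ie_i(X_\ell)=\bigoplus_j \sum_i a_ie_i(X^{(j)})$ is nonzero whenever $a\ne 0$.

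Having produced $X_\ell$, define $\phi_\ell\colon V\to\C$ by $\phi_\ell(\rs)=\frac{1}{n_\ell}\tr\rs(X_\ell)$. This is a $\C$-linear functional on $V$ since $V$ is generated as an algebra by $\cR$ and $X_\ell\in\hdom q$ for every $q\in R$. Hermiticity of $X_\ell$ gives $\phi_\ell(\rs^*)=\overline{\phi_\ell(\rs)}$, while $\phi_\ell(\rs\rs^*)=\frac{1}{n_\ell}\|\rs(X_\ell)\|_F^2\ge 0$, with strict inequality whenever $\rs\in V_\ell\setminus\{0\}$ by the injectivity of evaluation at $X_\ell$.

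To assemble, fix a $\C$-basis $\{e_k\}_{k\ge 1}$ of $V$ compatible with the filtration (so $V_\ell=\spa_\C\{e_1,\dots,e_{N_\ell}\}$ for some increasing $N_\ell$), and choose $c_\ell>0$ small enough, e.g.\ $c_\ell\le 2^{-\ell}\bigl(1+\max_{k\le\ell}|\phi_\ell(e_k)|\bigr)^{-1}$, so that $\sum_\ell c_\ell|\phi_\ell(e_k)|<\infty$ for every $k$ (the tail $\ell\ge k$ is controlled by $\sum_{\ell\ge k}2^{-\ell}$). Then $\phi:=\sum_\ell c_\ell\phi_\ell$ converges absolutely on $V$, is Hermitian and linear, and for any nonzero $\rs\in V$ one picks the minimal $m$ with $\rs\in V_m$ to obtain $\phi(\rs\rs^*)\ge c_m\phi_m(\rs\rs^*)>0$, all other summands being $\ge 0$. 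The main obstacle is the first step, the existence of the injective evaluations $X_\ell$, which is precisely where the skew-field structure of $\rx$ and the Zariski density of hermitian tuples inside matricial domains come into play; the remainder is a convergence bookkeeping.
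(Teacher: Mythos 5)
Your proof is correct and follows the same blueprint as the paper's: choose, for each $\ell$, a hermitian tuple $X_\ell$ in the common domain of $R$ on which the evaluation map is injective on $V_\ell$, take the normalized trace functionals $\phi_\ell$, and form a convergent positive combination. The one place you diverge is that you give an explicit direct-sum/DCC argument for the existence of the separating tuples $X_\ell$, whereas the paper cites this as the local-global linear dependence principle (\cite[Theorem~6.5]{Vol2} or \cite[Corollary~8.87]{BPT}); your argument is essentially the standard proof of that principle, so this is a matter of self-containedness rather than a different route. You also handle convergence by tuning the coefficients $c_\ell$ to dominate $\max_{k\le\ell}|\phi_\ell(e_k)|$, while the paper instead imposes a uniform norm bound $\max_{q\in R}\|q(X^{(\ell)})\|\le m+1$ on the chosen tuples (which the local-global principle permits, since it produces separating points in any nonempty Zariski-open set) and then uses $1/\ell!$ decay; both devices do the same job, and yours avoids having to ask for the extra norm condition on $X_\ell$.
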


\begin{proof}
For some $X\in\hdom r$ let $m=\max_{q\in R}\|q(X)\|$. Let $\ell\in\N$. Since $V_\ell$ is finite-dimensional, there exist $n_\ell\in\N$ and $X^{(\ell)}\in\hdom_{n_\ell} r$ such that
\begin{equation}\label{e:pt}
\max_{q\in R}\|q(X^{(\ell)})\|\le m+1 \qquad \text{and}\qquad 
\rs(X^{(\ell)})\neq 0\ \text{ for all }\ \rs\in V_{\ell}\setminus\{0\}
\end{equation}
by the local-global linear dependence principle for noncommutative rational functions, see \cite[Theorem 6.5]{Vol2} or \cite[Corollary 8.87]{BPT}. Define
$$\phi:V\to\C,\qquad
\phi(\rs)=\sum_{\ell=1}^\infty \frac{1}{\ell!\cdot n_\ell}\tr\left(\rs(X^{(\ell)}) \right).$$
Since $V$ is a $\C$-algebra generated by $\cR$, routine estimates show that $\phi$ is well-defined. It is also clear that $\phi$ has the desired properties.
\end{proof}

For the rest of the paper fix a functional $\phi$ as in Lemma \ref{l:fun}. Then
\begin{equation}\label{e:inner}
(\rs_1,\rs_2)=\phi(\rs_2^*\rs_1)
\end{equation}
is an inner product on $V$. With respect to this inner product we can inductively build an ordered orthogonal basis $\cB$ of $V$ with the property that $\cB\cap V_\ell$ is a basis of $V_\ell$ for every $\ell\in\N$.

\begin{lem}\label{l:fX}
With respect to the inner product \eqref{e:inner} and the ordered basis $\cB$ as above, operators $\fX_1,\dots,\fX_d$ are represented by hermitian matrices in $\cfm$, and $\fX\in\dom_\infty r$.
\end{lem}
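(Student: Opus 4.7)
My plan is to split Lemma \ref{l:fX} into two assertions: first, that each $\fX_j$ is a column-finite hermitian matrix in the chosen basis; second, that every subexpression of $r$ (and of $r^*$), and in particular $r$ itself, yields a well-defined element of $\cfm$ when evaluated at $\fX$. The first assertion is almost immediate from the way $V$ was built, while the second requires an induction on formal subexpressions whose only delicate step is inversion.

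For the first part, the observation will be that under our standing assumption every variable occurs as a subexpression of $r$, so $x_j\in\cR$. Given any $b\in\cB\cap V_\ell$, the image $\fX_jb=x_jb$ lies in $\cR V_\ell\subseteq V_{\ell+1}$, a finite-dimensional subspace spanned by $\cB\cap V_{\ell+1}$. Thus every column of $\fX_j$ has only finitely many nonzero entries, and (after normalizing $\cB$) $\fX_j\in\cfm$. Hermiticity with respect to \eqref{e:inner} follows at once from $x_j^*=x_j$ and the $*$-compatibility of $\phi$:
\[(\fX_j\rs_1,\rs_2)=\phi(\rs_2^*x_j\rs_1)=\phi((x_j\rs_2)^*\rs_1)=(\rs_1,\fX_j\rs_2).\]

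For the second part, I plan to prove by structural induction on subexpressions $q$ of $r$ and of $r^*$ the following claim: $q(\fX)$ is well-defined in $\cfm$, and it coincides with the left multiplication operator $M_\qq\colon \rs\mapsto \qq\rs$ on $V$. The base cases (scalars and variables) are immediate, and the sum/product inductive steps are routine via $M_{\qq_1+\qq_2}=M_{\qq_1}+M_{\qq_2}$ and $M_{\qq_1\qq_2}=M_{\qq_1}M_{\qq_2}$; the same column-finiteness argument as for $\fX_j$ shows that $M_\qq\in\cfm$ for every $\qq\in\cR$, since $\qq V_\ell\subseteq V_{\ell+1}$.

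The main obstacle is the inversion step $q=q_1^{-1}$, where one needs $q_1(\fX)=M_{\qq_1}$ to be invertible inside $\cfm$—this is not automatic because $\cfm$ is far from being a skew field. The key point, and precisely the reason $R$ was defined to contain every subexpression of $r$ and of $r^*$, is that the presence of $q_1^{-1}$ in the tree forces $q_1^{-1}\in R$, hence $\qq_1^{-1}\in\cR$. Therefore $M_{\qq_1^{-1}}$ is already a column-finite operator on $V$, and the skew-field identities $\qq_1\qq_1^{-1}=\qq_1^{-1}\qq_1=1$ exhibit $M_{\qq_1^{-1}}$ as a two-sided inverse of $M_{\qq_1}$ inside $\cfm$. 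Thus $q(\fX)=M_{\qq_1}^{-1}=M_{\qq_1^{-1}}=M_\qq$, closing the induction. Specializing to $q=r$ yields $r(\fX)=M_\rr\in\cfm$, which is exactly the assertion $\fX\in\dom_\infty r$.
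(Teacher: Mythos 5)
Your proof is correct and follows the same approach as the paper's: hermiticity and column-finiteness of $\fX_j$ from the $*$-compatibility of $\phi$ and the filtration $\fX_j(V_\ell)\subseteq V_{\ell+1}$, followed by induction on the structure of $r$ using the fact that $\fX_j$ acts as left multiplication on $V$. The paper compresses the second half to a single sentence, while you usefully spell out the inversion step—that $q_1^{-1}\in R$ forces $\qq_1^{-1}\in\cR\subset V$, so left multiplication by $\qq_1^{-1}$ is already a column-finite operator and a two-sided inverse of $q_1(\fX)$ inside $\cfm$—but the underlying argument is the one the paper intends.
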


\begin{proof}
Since
$$\big(\fX_j\rs_1,\rs_2\big)
= \phi\big(\rs_2^*x_j\rs_1\big)
= \big( \rs_1,\fX_j\rs_2\big)$$
for all $\rs_1,\rs_2\in V$ and $\fX_j(V_\ell)\subseteq V_{\ell+1}$ for all $\ell\in\N$, it follows that the matrix representation of $\fX_j$ with respect to $\cB$ is hermitian and has only finitely many nonzero entries in each column and row.
The rest follows inductively on the construction of $r$ since 
$\fX_j$ are the left multiplication operators on $V$.
\end{proof}

Next we define a complexity-measuring function $\tau:\rex\to\N\cup\{0\}$ as in \cite[Section 4]{KPV}:
\begin{enumerate}[(i)]
	\item $\tau(\alpha)=0$ for $\alpha\in\C$;
	\item $\tau(x_j)=1$ for $1\le j\le d$;
	\item $\tau(s_1+s_2)=\max\{\tau(s_1),\tau(s_2)\}$ for $s_1,s_2\in\rex$;
	\item $\tau(s_1s_2)=\tau(s_1)+\tau(s_2)$ for $s_1,s_2\in\rex$;
	\item $\tau(s^{-1})=2\tau(s)$ for $s,s^{-1}\in\rex$.
\end{enumerate}
Note that $\tau(s^*)=\tau(s)$ for all $s\in\rex$.

\begin{prop}\label{p:gns}
Let the notation be as above, and let $U$ be a finite-dimensional Hilbert space containing $V_{\ell+1}$. If $X$ is a $d$-tuple of hermitian operators on $U$ such that $X\in\hdom r$ and
$$X_j|_{V_\ell}=\fX_j|_{V_\ell}$$
for $j=1,\dots,d$, then $X\in\hdom q$ and
\begin{equation}\label{e:gns}
q(X)\rs=\qq\rs
\end{equation}
for every $q\in R$ and $s\in \overbrace{R\cdots R}^\ell$ satisfying $2\tau(q)+\tau(s)\le \ell+2$.
\end{prop}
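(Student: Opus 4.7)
The plan is a structural induction on the tree of $q$, with the inductive claim strengthened from products $\rs$ to arbitrary vectors in $V_m$. The domain part $X\in\hdom q$ comes essentially for free: evaluating a formal expression requires first evaluating every subexpression, so $X\in\hdom r$ propagates to every subexpression of $r$, and hermiticity $X=X^*$ together with $X^*\in\dom r^*$ covers subexpressions of $r^*$; hence $X\in\hdom q$ for all $q\in R$.

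For the identity $q(X)\rs=\qq\rs$ I would prove the cleaner strengthened claim: \emph{for every $q\in R$ and every $\rv\in V_m$ with $m+2\tau(q)\le \ell+2$, one has $q(X)\rv=\qq\rv$.} The proposition follows by taking $m=\tau(s)$, because $\rs\in V_{\tau(s)}$: every factor in $R\setminus\{1\}$ has $\tau\ge 1$ (being a non-constant subexpression), so the trivial $1$-factors in $s=s_1\cdots s_\ell$ can be absorbed, leaving at most $\tau(s)$ non-trivial factors.

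Induct on the tree of $q$. The base cases $q\in\C$ and $q=x_j$ are immediate; for $q=x_j$ the bound forces $m\le \ell$, so $X_j\rv=\fX_j\rv=x_j\rv$ by the hypothesis $X_j|_{V_\ell}=\fX_j|_{V_\ell}$. For $q=q_1+q_2$ the induction applies to each $q_i$ at the same level $m$ since $\tau(q_i)\le\tau(q)$. For $q=q_1q_2$ with $\tau(q_2)\ge 1$ (the scalar case $\tau(q_2)=0$ is trivial), first apply induction to $q_2$ at level $m$ to obtain $q_2(X)\rv=\qq_2\rv\in V_{m+1}$, then apply induction to $q_1$ at level $m+1$ on $\qq_2\rv$ to get $q_1(X)(\qq_2\rv)=\qq\rv$; the bound $(m+1)+2\tau(q_1)\le\ell+2$ follows from $m+2\tau(q_1)+2\tau(q_2)\le\ell+2$ together with $\tau(q_2)\ge 1$.

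The inverse step $q=q_1^{-1}$ (with $\tau(q_1)\ge 1$, the constant case being trivial) is where the complexity bound is tight and constitutes the main obstacle. Apply induction to $q_1$ at level $m+1$ on the vector $\qq\rv=\qq_1^{-1}\rv\in V_{m+1}$, yielding $q_1(X)(\qq\rv)=\qq_1(\qq\rv)=\rv$. The required bound $(m+1)+2\tau(q_1)\le\ell+2$ is exactly what $m+4\tau(q_1)=m+2\tau(q)\le\ell+2$ combined with $\tau(q_1)\ge 1$ supplies; this is why the doubling rule $\tau(q_1^{-1})=2\tau(q_1)$ in the definition of $\tau$ is precisely the slack needed to tolerate the level bump $m\mapsto m+1$ at each inversion. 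Since $q_1(X)$ is invertible on $U$ (because $X\in\hdom q_1$), applying $q_1(X)^{-1}$ then gives $q(X)\rv=q_1(X)^{-1}\rv=\qq\rv$, closing the induction.
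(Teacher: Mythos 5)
Your proof is correct and follows essentially the same inductive scheme as the paper: structural induction on $q$ with the doubling $\tau(q^{-1})=2\tau(q)$ providing exactly the slack to absorb the passage $V_m\to V_{m+1}$ at each product/inversion node. The only deviation is cosmetic — you track the ``level'' $m$ of $\rv\in V_m$ rather than $\tau(s)$ of a monomial $\rs$, and you should note (as the paper implicitly does) that a non-scalar subexpression $q$ can have $\tau(q)=0$ when it contains no variables, in which case $\qq\in\C$ and the claim is trivial, so the blanket ``$\tau\ge1$ for elements of $R\setminus\{1\}$'' needs that caveat.
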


\begin{proof}
First note that for every $s\in R\cdots R$,
\begin{equation}\label{e:tau}
\tau(s)\le k\quad \Rightarrow \quad s\in \overbrace{R\cdots R}^k
\end{equation}
since $\tau^{-1}(0)=\C$ and $R\cap\C=\{1\}$. We prove \eqref{e:gns} by induction on the construction of $q$.
If $q=1$ then \eqref{e:gns} trivially holds, and if $q=x_j$ then $\tau(s)\le \ell$ so \eqref{e:gns} holds by \eqref{e:tau}. 
Next, if \eqref{e:gns} holds for $q_1,q_2\in R$ such that $q_1+q_2\in R$ or $q_1q_2\in R$, then it also holds for the latter by the definition of $\tau$ and \eqref{e:tau}. 
Finally, suppose that \eqref{e:gns} holds for $q\in R\setminus\{1\}$ and assume $q^{-1}\in R$. If $2\tau(q^{-1})+\tau(s)\le \ell+2$, then $2\tau(q)+(\tau(q^{-1})+\tau(s))\le \ell+2$.
In particular, $\tau(q^{-1}s)\le \ell$ and so $q^{-1}s\in \overbrace{R\cdots R}^\ell$ by \eqref{e:tau}. Therefore
$$q(X)\qq^{-1}\rs = \qq\qq^{-1}\rs=\rs$$
by the induction hypothesis, and hence $q^{-1}(X)\rs =\qq^{-1}\rs$ since $X\in\hdom q^{-1}$. Thus \eqref{e:gns} holds for $q^{-1}$. 
\end{proof}

\section{Positive noncommutative rational functions}\label{s5}

In this section we prove various positivity statements for noncommutative rational functions. Let $L$ be a hermitian monic pencil of size $e$; that is, $L=I+H_1x_1+\dots+H_dx_d$ with $H_j\in\her{e}$. Then
$$\cD(L) = \bigcup_{n\in\N}\cD_n(L),
\qquad \text{where}\quad\cD_n(L)=\{X\in\her{n}^d\colon L(X)\succeq0 \},$$
is a {\bf free spectrahedron}. The main result of the paper is Theorem \ref{t:main}, which describes noncommutative rational functions that are positive semidefinite or undefined at each tuple in a given free spectrahedron $\cD(L)$. In particular, Theorem \ref{t:main} generalizes \cite[Theorem 3.1]{Pas} to noncommutative rational functions with singularities in $\cD(L)$.

\subsection{Rational convex Positivstellensatz}

Let $L$ be a hermitian monic pencil of size $e$. To $r\in\rex$ we assign the finite set $R$, vector spaces $V_\ell$ and operators $\fX_j$ as in Section \ref{s4}. For $\ell\in\N$ we also define
$$\hV_\ell = \{\rs\in V_\ell\colon \rs=\rs^* \}$$
$$Q_\ell = \left\{
\sum_i \rs_i^*\rs_i+\sum_j \rv_j^* L\rv_j\colon \rs_i \in V_\ell, \rv_j\in V_\ell^e
\right\}\subset \hV_{2\ell+1}.$$
Then $\hV_\ell$ is a real vector space and $Q_\ell$ is a convex cone. The proof of the following proposition is a rational modification of a common argument in free real algebraic geometry (cf. \cite[Proposition 3.1]{HKM} and \cite[Proposition 4.1]{KPV}). A convex cone is salient if it does not contain a line.

\begin{prop}\label{p:closed}
The cone $Q_\ell$ is salient and closed in $\hV_{2\ell+1}$ with the Euclidean topology.
\end{prop}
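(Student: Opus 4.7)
My plan is to adapt the faithful-state / Carath\'eodory argument from the polynomial setting \cite[Proposition 3.1]{HKM} and its regular-rational refinement \cite[Proposition 4.1]{KPV}. I will construct a single $\C$-linear functional $\lambda\colon V_{2\ell+1}\to\C$ that is strictly positive on both types of elementary elements generating $Q_\ell$: namely $\lambda(\rs^*\rs)>0$ for every nonzero $\rs\in V_\ell$, and $\lambda(\rv^*L\rv)>0$ for every nonzero $\rv\in V_\ell^e$. Given such a $\lambda$, both salience and closedness of $Q_\ell$ will fall out of the finite-dimensionality of $\hV_{2\ell+1}$.

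To produce $\lambda$ I would mimic the trace-averaging construction of Lemma \ref{l:fun}, but this time forcing the evaluation points to lie in the Euclidean interior $\{X\in\her{n}^d:L(X)\succ 0\}$ of $\cD_n(L)$. Since $L$ is monic, this set is a nonempty Euclidean-open neighborhood of the origin, and for all sufficiently large $n$ it meets the nonempty Zariski-open set $\hdom_n r$ of Remark \ref{r:dom} in a nonempty Euclidean-open subset of $\her{n}^d$. The local-global linear dependence principle for noncommutative rational functions invoked in the proof of Lemma \ref{l:fun} then furnishes finitely many tuples $X^{(1)},\dots,X^{(N)}$ in this intersection that jointly separate the finite-dimensional space $V_\ell$. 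Setting $\lambda(\rs):=\sum_{k=1}^{N}\tr(\rs(X^{(k)}))$, one checks that $\lambda(\rs^*\rs)=\sum_k\|\rs(X^{(k)})\|_F^2$ vanishes only when $\rs=0$, while
$$\lambda(\rv^*L\rv)=\sum_{k=1}^{N}\tr\bigl(\rv(X^{(k)})^*L(X^{(k)})\rv(X^{(k)})\bigr)$$
vanishes only when each $L(X^{(k)})^{1/2}\rv(X^{(k)})=0$; strict positivity of $L(X^{(k)})$ forces $\rv=0$.

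Salience of $Q_\ell$ is then immediate: if $\rs,-\rs\in Q_\ell$, then $\lambda(\rs)=\lambda(-\rs)=0$, and expanding $\lambda(\rs)$ along a $Q_\ell$-decomposition of $\rs$ forces every summand, and hence $\rs$ itself, to vanish. For closedness, let $\rs^{(m)}\in Q_\ell$ with $\rs^{(m)}\to\rs$ in $\hV_{2\ell+1}$. Carath\'eodory's theorem, applied to $Q_\ell$ viewed as the conic hull of $\{\rs^*\rs:\rs\in V_\ell\}\cup\{\rv^*L\rv:\rv\in V_\ell^e\}$ inside the finite-dimensional real space $\hV_{2\ell+1}$, lets me write each $\rs^{(m)}$ as a sum of at most $D:=\dim_\R\hV_{2\ell+1}$ elementary terms. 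Since $\lambda(\rs^{(m)})$ converges and all summand contributions are nonnegative, the values of the positive definite Hermitian forms $\rs\mapsto\lambda(\rs^*\rs)$ on $V_\ell$ and $\rv\mapsto\lambda(\rv^*L\rv)$ on $V_\ell^e$ stay bounded on the summands; in finite dimension they are equivalent to any fixed Euclidean norm, so the summand sequences are norm-bounded, admit convergent subsequences, and the limits furnish a $Q_\ell$-representation of $\rs$. The main technical point is the construction of the separating tuples $X^{(k)}$ inside the Euclidean interior of $\cD_n(L)\cap\hdom_n r$, combining the monicity of $L$, the Zariski-openness of $\hdom_n r$, and the local-global principle used for Lemma \ref{l:fun}; everything else is a routine repeat of the standard closedness-of-SOS-cone argument.
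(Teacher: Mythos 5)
Your proof is correct and takes essentially the same route as the paper: a separating evaluation near the origin (so that $L\succ0$ there) supplied by the local-global linear dependence principle, followed by Carath\'eodory's theorem and a compactness argument in the finite-dimensional space $\hV_{2\ell+1}$. The only cosmetic difference is that you package the separating point(s) into a faithful trace-type functional $\lambda$ and bound the elementary summands via the positive definite quadratic forms $\rs\mapsto\lambda(\rs^*\rs)$ and $\rv\mapsto\lambda(\rv^*L\rv)$, whereas the paper works with a single point $X$ satisfying $L(X)\succeq\tfrac12 I$ and $\rs(X)\neq0$ for all nonzero $\rs\in V_{2\ell+1}$, and bounds the summands directly with the norm $\|\rs\|_\bullet=\|\rs(X)\|$.
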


\begin{proof}
As in the proof of Lemma \ref{l:fun} there exists $X\in\hdom r$ such that
$$\rs(X)\neq0 \qquad \text{for all}\quad \rs\in V_{2\ell+1}\setminus\{0\}.$$
Furthermore, we can choose $X$ close enough to 0 so that $L(X)\succeq \frac12 I$. Then clearly $\rs(X)\succeq0$ for every $\rs\in Q_\ell$, so $Q_\ell\cap-Q_\ell=\{0\}$ and thus $Q_\ell$ is salient. Note that $\|\rs\|_{\bullet}= \|\rs(X)\|$ is a norm on $V_{2\ell+1}$. Also, finite-dimensionality of $\hV_{2\ell+1}$ implies that every element of $Q_\ell$ can be written as a sum of $N=1+\dim\hV_{2\ell+1}$ elements of the form
$$\rs^*\rs\quad \text{and}\quad \rv^* L\rv 
\qquad \text{for}\ \rs\in V_\ell,\ \rv\in V_\ell^e$$
by Carath\'eodory's theorem \cite[Theorem I.2.3]{Bar}. Assume that a sequence $\{\rr_n\}_n\subset Q_\ell$ converges to $\rs\in \hV_{2\ell+1}$. After restricting to a subsequence we can assume that there is $0\le M\le N$ such that
$$\rr_n=\sum_{i=1}^M \rs_{n,i}^*\rs_{n,i}+\sum_{j=M+1}^N \rv_{n,j}^* L\rv_{n,j}$$
for all $n\in\N$. The definition of the norm $\|\cdot\|_\bullet$ implies
$$\|\rs_{n_i}\|_\bullet^2\le \|\rr_n\|_\bullet
\quad \text{and} \quad
\max_{1\le i\le e}\|(\rv_{n_j})_i\|_\bullet^2\le 2\|\rr_n\|_\bullet.$$
In particular, the sequences $\{\rs_{n,i}\}_n\subset V_\ell$ for $1\le i\le M$ and $\{\rv_{n,j}\}_n\subset V_\ell^e$ for $1\le j\le N$ are bounded. Hence, after restricting to subsequences, we may assume that they are convergent: $\rs_i=\lim_n\rs_{n,i}$ for $1\le i\le M$ and $\rv_j=\lim_n\rv_{n,j}$ for $1\le j\le N$. Consequently we have
\[\rs=\lim_n\rr_n=
\sum_{i=1}^M \rs_i^*\rs_i+\sum_{j=M+1}^N \rv_j^* L\rv_j
\in Q_\ell. \qedhere\]
\end{proof}

We are now ready to prove the main result of this paper by combining 
a truncated GNS construction with 
extending matrix tuples into the domain of a rational expression 
as in Proposition \ref{p:main}.

\begin{thm}[Rational convex Positivstellensatz]\label{t:main}
Let $L$ be a hermitian monic pencil and $r\in\rex$. If $Q_{2\tau(r)+1}$ is as above, then $r(X)\succeq0$ for every $X\in\hdom r\cap\cD(L)$ if and only if $\rr\in Q_{2\tau(r)+1}$.
\end{thm}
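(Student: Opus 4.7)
For the $(\Leftarrow)$ direction I would simply unpack the identity: if $\rr=\sum_i\rs_i^*\rs_i+\sum_j\rv_j^*L\rv_j$ with $\rs_i\in V_{2\tau(r)+1}$ and $\rv_j\in V_{2\tau(r)+1}^e$, every summand is a $\C$-linear combination of products of members of $\cR$, each of which is defined throughout $\hdom r$; evaluating at $X\in\hdom r\cap\cD(L)$ and using $L(X)\succeq0$ then yields $\rr(X)\succeq0$.

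The $(\Rightarrow)$ direction I would prove by contrapositive. Set $k:=2\tau(r)+1$ and assume $\rr\notin Q_k$; the aim is to exhibit $X\in\hdom r\cap\cD(L)$ with $r(X)\not\succeq0$. Since $Q_k$ is closed and salient in the finite-dimensional real vector space $\hV_{2k+1}$ (Proposition~\ref{p:closed}) and $\rr=\rr^*\in\hV_{2k+1}$ may be assumed (otherwise $r(X)$ is not even hermitian and the statement is trivial), Hahn-Banach separation would supply $\lambda\colon\hV_{2k+1}\to\R$ with $\lambda|_{Q_k}\ge0$ and $\lambda(\rr)<0$. The $*$-preserving extension $\mu\colon V_{2k+1}\to\C$ given by $\mu(\rs):=\lambda(\tfrac{\rs+\rs^*}{2})+i\lambda(\tfrac{\rs-\rs^*}{2i})$ would satisfy $\mu(\rs^*\rs)\ge0$ for $\rs\in V_k$ and $\mu(\rv^*L\rv)\ge0$ for $\rv\in V_k^e$, since $\rs^*\rs,\rv^*L\rv\in Q_k$.

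Next I would run a truncated GNS construction. Quotienting $V_k$ by the null space $N$ of the positive semidefinite hermitian form $\langle\rs_1,\rs_2\rangle:=\mu(\rs_2^*\rs_1)$ produces a finite-dimensional Hilbert space $U$ that contains the image $U_0$ of $V_{k-1}$. Because $x_j^*=x_j$, left multiplication by $x_j$ on $V_{k-1}$ is formally symmetric for $\mu$ and descends to a linear map $U_0\to U$, which I would extend to a hermitian operator $X_j$ on $U$ via the standard block symmetrization $\bigl(\begin{smallmatrix}T_0 & T_1^*\\ T_1 & 0\end{smallmatrix}\bigr)$ along $U_0\oplus U_0^\perp$; a hermitian shift of the free block, combined with $\mu(\rv^*L\rv)\ge0$, arranges $L(X)\succeq0$ on $\C^e\otimes U$. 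A variant of Proposition~\ref{p:gns} adapted to the $\mu$-form (applied with $\ell=k-1$, which respects the bound $2\tau(r)\le\ell+2$) would then give $r(X)\bar1=\bar\rr$ in $U$, so that $\langle r(X)\bar1,\bar1\rangle=\mu(\rr)=\lambda(\rr)<0$—\emph{provided} $X\in\hdom r$.

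The hard part is exactly this proviso: the constructed $X$ need not lie in the hermitian domain of $r$, and the machinery of Section~\ref{s3} is what resolves the obstruction. I would embed $U$ isometrically into a countable-dimensional Hilbert space on which a hermitian tuple $\widetilde\fX\in\dom_\infty r$ acts and agrees with $X$ on the finite-dimensional top-left block, thereby placing $X$ in the hypothesis of Proposition~\ref{p:main}. That proposition supplies a finite-dimensional hermitian extension $\widetilde X=\bigl(\begin{smallmatrix}X & *\\ * & Z\end{smallmatrix}\bigr)\in\hdom r$, up to a similarity on the complementary block. A block-matrix calculation with the linear representation of $r$, together with a judicious choice of $Z$ absorbing a direct summand already in $\hdom r\cap\cD(L)$, should transport both the negative evaluation of $r$ on the distinguished test vector and the inequality $L(\widetilde X)\succeq0$ to the enlarged tuple, contradicting the hypothesis. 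The principal obstacle of the whole argument is precisely this extension step: noncommutative rational functions can carry persistent hermitian singularities that no ad-hoc GNS truncation can avoid, and Theorem~\ref{t:nonherm} is the instrument that skirts them.
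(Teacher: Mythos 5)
Your strategy follows the paper's broad outline: reduce to the contrapositive of the forward direction, separate $\rr$ from the salient closed cone $Q_{2\tau(r)+1}$ by Hahn--Banach, run a truncated GNS construction, and use Proposition~\ref{p:main} (hence Theorem~\ref{t:nonherm}) to push the candidate tuple into $\hdom r$. You also correctly identify the domain extension as the crux. However, the GNS step as you describe it has a genuine gap.

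You separate with only $\lambda|_{Q_k}\ge0$, obtain a positive \emph{semi}definite form, and quotient $V_k$ by its null space. The paper instead exploits salience and closedness of $Q_k$ to obtain a functional with $\lambda_0(Q_k\setminus\{0\})=\R_{>0}$, so that $\langle\rs_1,\rs_2\rangle=\lambda(\rs_2^*\rs_1)$ is a \emph{genuine} inner product and no quotient is taken. This matters in two places. First, in a truncated GNS the null space of a semidefinite form is not automatically preserved by the multiplication operators: the Cauchy--Schwarz estimate that would make it a left ideal requires evaluating $\mu$ on $(x_j^2\rs)^*(x_j^2\rs)$, which overshoots the window $V_{2k+1}$ on which $\mu$ is defined; so the descent of $\fX_j$ to your quotient — and with it the identification $X_j|_{V_\ell}=\fX_j|_{V_\ell}$ that Propositions~\ref{p:main} and~\ref{p:gns} demand relative to the spaces $V_\ell$ of Section~\ref{s4} — is unjustified. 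Second, strict positivity gives $\langle L(\fX)\rv,\rv\rangle>0$ on the relevant finite block, which is precisely what lets the paper arrange $L(\widetilde X)\succeq0$ by merely scaling $Z$ and $E$ toward $0$; with $\ge0$ alone there is no such robustness, and ``a hermitian shift of the free block'' does not provide a substitute. Relatedly, the plan to ``embed $U$ isometrically into a countable-dimensional Hilbert space on which $\widetilde\fX\in\dom_\infty r$ acts'' runs in the wrong direction: the paper fixes $\fX$ on all of $V$ once and for all (Section~\ref{s4}) and then \emph{re-chooses} the inner product on $V$ so that it restricts to the separating form on $V_{\ell+2}$ while keeping the $\fX_j$ hermitian matrices in $\cfm$; retro-fitting a multiplicative structure onto a GNS quotient is exactly where such arguments tend to break.
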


\begin{proof}
Only the forward implication is nontrivial. Let $\ell=2\tau(r)-2$. If $\rr\neq\rr^*$, then there exists $X\in\hdom r$ such that $\rr(X)\neq\rr(X)^*$. Thus we assume $\rr=\rr^*$. Suppose that $\rr\notin Q_{\ell+3}$. Since $Q_{\ell+3}$ is a salient closed convex cone in $\hV_{2\ell+7}$ by Proposition \ref{p:closed}, there exists a linear functional $\lambda_0:\hV_{2\ell+7}\to\R$ such that $\lambda_0(Q_{\ell+3}\setminus\{0\})=\R_{>0}$ and $\lambda_0(\rr)<0$ by the Hahn-Banach separation theorem \cite[Theorem III.1.3]{Bar}. 
We extend $\lambda_0$ to $\lambda:V_{2\ell+7}\to\C$ as $\lambda(\rs)=\frac12\lambda_0(\rs+\rs^*)+\frac{i}{2}\lambda_0(i(\rs^*-\rs))$.
Then $\langle \rs_1,\rs_2\rangle=\lambda(\rs_2^*\rs_1)$ defines a scalar product on $V_{\ell+3}$. Recall that $\fX_j(V_{\ell+2})\subseteq V_{\ell+3}$. Then for $\rs_1\in V_{\ell+1}$ and $\rs_2\in V_{\ell+2}$,
\begin{equation}\label{e:sym}
\langle \fX_j\rs_1,\rs_2\rangle
 = \lambda\big(\rs_2^*x_j\rs_1\big)
 = \langle \rs_1,\fX_j\rs_2\rangle.
\end{equation}
Furthermore,
\begin{equation}\label{e:L}
\langle L(\fX)\rv,\rv\rangle = \lambda(\rv^*L\rv)>0
\end{equation}
for all $\rv\in V_{\ell+1}^e$, where the canonical extension of $\langle \cdot,\cdot\rangle$ to a scalar product on $\C^e\otimes V_{\ell+1}$ is considered.

Let $\cB$ be an ordered orthogonal basis of $V$ with respect to the inner product $(\cdot,\cdot)$ as in Section \ref{s4}; recall that such a basis has the property that $\cB\cap V_k$ is a basis for $V_k$ for all $k\in\N$. 
Let $\cB_0$ be an ordered orthogonal basis of $V_{\ell+2}$ with respect to $\langle\cdot,\cdot\rangle$ that contains a basis for $V_{\ell+1}$, and let $\cB_1=\cB\setminus V_{\ell+2}$. 
If we identify operators $\fX_j$ with their matrix representations relative to the ordered basis $(\cB_0,\cB_1)$ of $V$, then $\fX_j\in\cfm$ are hermitian matrices by Lemma \ref{l:fX} and \eqref{e:sym}.

Let $U_0$ be the orthogonal complement of $V_{\ell+1}$ in $V_{\ell+2}$ relative to $\langle\cdot,\cdot\rangle$. Since $\fX_j(V_{\ell+1})\subseteq V_{\ell+2}$, we can consider the restriction $\fX_j|_{V_{\ell+1}}$
in a block form
$$\begin{pmatrix}X_j \\Y_j\end{pmatrix}$$
with respect to the decomposition $V_{\ell+2}=V_{\ell+1}\oplus U_0$.
Since $\fX\in\dom_\infty r$,
by Proposition \ref{p:main} there exist a finite-dimensional vector space $U_1$, a scalar product on $V_{\ell+1}\oplus U_0\oplus U_1$ extending $\langle\cdot,\cdot\rangle$, an operator $E$ on $U_0\oplus U_1$, and a $d$-tuple $Z$ of hermitian operators on $U_0\oplus U_1$ 
such that
\begin{equation}\label{e:c1}
\widetilde{X}:=\begin{pmatrix}
X & \begin{pmatrix} Y^* & 0\end{pmatrix}E^* \\
E\begin{pmatrix} Y \\ 0\end{pmatrix} & \mathlarger{Z}
\end{pmatrix}\in\hdom r.
\end{equation}
Since $\fX_j(V_\ell)\subseteq V_{\ell+1}$, we conclude that
\begin{equation}\label{e:c2}
\widetilde{X}_j|_{V_\ell}=\fX_j|_{V_\ell}.
\end{equation}
Observe that for all but finitely many $\ve_1,\ve_2>0$ we can replace $Z,E$ with $\ve_1 Z,\ve_2 E$ and \eqref{e:c1} still holds. By \eqref{e:L} we can thus assume that $Z$ and $E$ are close enough to $0$ so that $L(\widetilde{X})\succeq0$.
Finally, since \eqref{e:c2} holds and $2\tau(r)+\tau(1)=\ell+2$, Proposition \ref{p:gns} implies
$$\langle r(\widetilde{X})1,1\rangle = \langle \rr,1\rangle=\lambda(\rr)<0.$$
Therefore $\widetilde{X}\in\hdom r\cap\cD(L)$ and $r(\widetilde{X})$ is not positive semidefinite.
\end{proof}

Given a unital $*$-algebra $\cA$ and $A=A^*\in\opm_\ell(\cA)$, the {\bf quadratic module} in $\cA$ generated by $A$ is
$$\QM_{\cA}(A) = \left\{
\sum_j v_j^* (1\oplus A)v_j\colon v_j\in \cA^{\ell+1}
\right\}.$$
Theorem \ref{t:main} then in particular states that noncommutative rational functions positive semidefinite on a free spectrahedron $\cD(L)$ belong to $\QM_{\rx}(L)$.

\begin{rem}\label{r:bound}
Let $r\in\rex$ and
$$n=2 \left(e^3 m^2+e m (2 e \ell-1)+\ell (e \ell-2)\right)$$
where $\ell=\dim V_{2\tau(r)-1}$, $m=\dim V_{2\tau(r)}-\dim V_{2\tau(r)-1}$ and $e$ is the size of a linear representation of $r$.
If $r\not\succeq0$ on $\hdom r\cap \cD(L)$, then by Remark \ref{r:size} and the proofs of Theorem \ref{t:main} and Proposition \ref{p:main} there exists $X\in\hdom_n r\cap \cD_n(L)$ such that $r(X)\not\succeq0$.
\end{rem}

The solution of Hilbert's 17th problem for a free skew field is now as follows.

\begin{cor}\label{c:H17}
Let $\rr\in\rx$. Then $\rr\succeq0$ on $\hdom\rr$ if and only if
$$\rr=\rr_1\rr_1^*+\cdots+\rr_m\rr_m^*$$
for some $\rr_i\in\rx$ with $\hdom\rr_i\supseteq \hdom\rr$.
\end{cor}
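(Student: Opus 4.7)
The plan is to apply Theorem \ref{t:main} with the trivial hermitian monic pencil $L = 1$ of size $e = 1$. Since $L(X) = 1 \succeq 0$ holds for every hermitian tuple, $\cD(L)$ coincides with all hermitian tuple-space, so the hypothesis ``$\rr \succeq 0$ on $\hdom \rr$'' will match the hypothesis of Theorem \ref{t:main} once $\rr$ is replaced by a representative $r \in \rr$ with $\hdom r = \hdom \rr$; such a representative exists by Proposition \ref{p:rep}. Moreover, with $e = 1$ the cone $Q_\ell$ collapses to $\{\sum_i \rs_i^* \rs_i + \sum_j \rv_j^* \rv_j : \rs_i, \rv_j \in V_\ell\}$, i.e., plain sums of hermitian squares with factors drawn from $V_\ell$. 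So Theorem \ref{t:main} will deliver $\rs_1, \ldots, \rs_m \in V_{2\tau(r)+1}$ with $\rr = \sum_i \rs_i^* \rs_i$; setting $\rr_i := \rs_i^*$ yields the stated decomposition $\rr = \sum_i \rr_i \rr_i^*$.

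The only content-bearing step left is to verify $\hdom \rr_i \supseteq \hdom \rr$. To do so I would unpack the definition $V_{2\tau(r)+1} = \spa_\C \overbrace{\cR \cdots \cR}^{2\tau(r)+1}$, where $\cR \subset \rx$ consists of rational functions represented by the finite set $R$ of subexpressions of $r$ and $r^*$ (together with $1$). Every $q \in R$ satisfies $\hdom q \supseteq \hdom r$: the subexpressions of $r$ trivially, and the subexpressions of $r^*$ via the identity $\hdom r^* = \hdom r$ on hermitian tuples (which follows from $X \in \dom r \Rightarrow X^* \in \dom r^*$ applied to $X = X^*$). Hence each $\rs_i$ admits a formal representative assembled from $R$ by iterated sums and products, and such an expression has a hermitian domain containing $\hdom r = \hdom \rr$; this containment is preserved under the involution, giving $\hdom \rr_i = \hdom \rs_i^* \supseteq \hdom \rr$. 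The converse direction of the corollary is immediate, since $\sum_i \rr_i(X) \rr_i(X)^* \succeq 0$ wherever all summands are defined.

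I anticipate no genuine obstacle: Theorem \ref{t:main} shoulders all the nontrivial work, and specializing the pencil to $L = 1$ is elementary. The one subtle point to be careful about is the preliminary appeal to Proposition \ref{p:rep}: without that single-representative reduction, the hypothesis ``$\rr \succeq 0$ on $\hdom \rr$'' would not directly supply the representative-level positivity demanded by Theorem \ref{t:main}, because an arbitrary representative $r \in \rr$ can have $\hdom r$ strictly smaller than $\hdom \rr$ (cf. Remark \ref{r:rep}).
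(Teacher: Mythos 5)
Your proposal is correct and follows the paper's proof essentially verbatim: invoke Proposition \ref{p:rep} to obtain a representative $r$ with $\hdom r=\hdom\rr$, apply Theorem \ref{t:main} with $L=1$, and observe that the summands $\rs_i$ lie in $V_{2\tau(r)+1}$ and hence have hermitian domains containing $\hdom\rr$. Your explicit unpacking of the last point (building a representative of each $\rs_i$ from subexpressions of $r$ and $r^*$ by sums and products, and using $\hdom r^*=\hdom r$ on hermitian tuples) is exactly what the paper leaves implicit, and the caveat you flag about needing Proposition \ref{p:rep} before Theorem \ref{t:main} can be applied is indeed the one nontrivial point.
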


\begin{proof}
By Proposition \ref{p:rep} there exists $r\in\rr$ such that $\hdom \rr=\hdom r$. The corollary then follows directly from Theorem \ref{t:main} applied to $L=1$ since the hermitian domain of an element in $V_{2\tau(r)}$ contains $\hdom \rr$.
\end{proof}

\begin{rem}
Corollary \ref{c:H17} also indicates a subtle distinction between solutions of Hilbert's 17th problem in the classical commutative context and in the free context. While every (commutative) positive rational function $\rho$ is a sum of squares of rational functions, in general one cannot choose summands that are defined on the whole real domain of the original function $\rho$. On the other hand, a positive noncommutative rational function always admits a sum-of-squares representation with terms defined on its hermitian domain.
\end{rem}

For a possible future use we describe noncommutative rational functions whose invertible evaluations have nonconstant signature; polynomials of this type were of interest in \cite[Section 3.3]{HKV}.

\begin{cor}
Let $\rr=\rr^*\in \rx$. The following are equivalent:
\begin{enumerate}[(i)]
\item there are $n\in\N$ and $X,Y\in\hdom_n \rr$ such that $\rr(X),\rr(Y)$ are invertible and have distinct signatures;
\item neither $\rr$ or $-\rr$ equals $\sum_i\rr_i\rr_i^*$ for some $\rr_i\in\rx$.
\end{enumerate}
\end{cor}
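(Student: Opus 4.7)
The plan is to prove both implications by contraposition, with Corollary \ref{c:H17} doing the heavy lifting in the nontrivial direction. If $\rr=0$ then neither condition holds (no invertible evaluations exist; and both $\rr,-\rr$ are the empty sum of squares), so we may assume $\rr\neq0$; then $\rr^{-1}\in\rx$, and Remark \ref{r:dom} supplies invertible hermitian evaluations of $\rr$ in every sufficiently large dimension. A key preliminary observation, proved once by structural induction on the representative $r\in\rr$ furnished by Proposition \ref{p:rep}, is that $X,Y\in\hdom\rr$ implies $X\oplus Y\in\hdom\rr$ with $\rr(X\oplus Y)=\rr(X)\oplus\rr(Y)$; this block-diagonalization lemma underpins both directions.

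For (i) $\Rightarrow$ (ii) I contrapose and assume $\rr=\sum_i\rr_i\rr_i^*$ in $\rx$ (the case of $-\rr$ being symmetric). On the Zariski-open subset $\hdom_n\rr\cap\bigcap_i\hdom_n\rr_i$ the identity $\rr(X)=\sum_i\rr_i(X)\rr_i(X)^*\succeq0$ holds pointwise. This subset is nonempty (hence Euclidean-dense in $\hdom_n\rr$) once $n$ exceeds the maximum linear-representation size of the $\rr_i$, so continuity of $\rr$ on its domain forces $\rr\succeq0$ on $\hdom_n\rr$ for such $n$. For smaller $n$, block-diagonalize any $X\in\hdom_n\rr$ with a large-dimensional $Y\in\hdom_m\rr$; then $\rr(X)\oplus\rr(Y)\succeq0$ yields $\rr(X)\succeq0$. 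Consequently every invertible evaluation $\rr(X)$ is positive definite, so all invertible evaluations in a fixed dimension share the same signature, contradicting (i).

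For (ii) $\Rightarrow$ (i) I again contrapose and assume that in every dimension all invertible evaluations $\rr(X)$ share a common signature $(p_n,q_n)$; the aim is to conclude that $\rr$ or $-\rr$ is PSD on $\hdom\rr$ so that Corollary \ref{c:H17} contradicts (ii). Fix $n_0$ with $\hdom_n\rr^{-1}\neq\emptyset$ for all $n\ge n_0$, so invertible evaluations exist in every such dimension. Block-diagonalizing invertible evaluations yields the additivity $p_{n+m}=p_n+p_m$ for $n,m\ge n_0$. Computing $p_{nn_0}$ two ways gives $np_{n_0}=n_0p_n$, so the ratio $c:=p_n/n$ is a constant independent of $n\ge n_0$; then $c=c(n+1)-cn\in\Z\cap[0,1]=\{0,1\}$. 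After replacing $\rr$ by $-\rr$ if necessary, assume $c=1$; then $\rr(X)\succ0$ on the Zariski-dense invertibility locus inside $\hdom_n\rr$, and continuity propagates $\rr\succeq0$ to all of $\hdom_n\rr$ for $n\ge n_0$. The block-diagonal trick from the first half extends this to every $n$, and Corollary \ref{c:H17} now writes $\rr$ as a sum of hermitian squares, contradicting (ii).

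The main obstacle is the integrality step in the second direction: extracting $c\in\{0,1\}$ from the additive-in-$n$ relation with only the a priori bound $0\le p_n\le n$. The double-counting $np_{n_0}=n_0p_n$ combined with the consecutive-integer difference $c(n{+}1)-cn=c$ is the clean way around it; once that is in hand, the rest reduces to routine Zariski-density, continuity, and block-diagonal bootstrapping already used throughout the paper.
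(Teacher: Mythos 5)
Your proof is correct and follows the same route as the paper: use direct sums to get the relation $\pi_{mn}=n\pi_m=m\pi_n$ (equivalently your additive version $p_{n+m}=p_n+p_m$), deduce $\pi_n\in\{0,n\}$, and then invoke Corollary~\ref{c:H17}. The only genuine divergence is the final arithmetic step: the paper takes $n$ a large prime, so $n\pi_{n_0}=n_0\pi_n$ and $\gcd(n,n_0)=1$ force $n\mid\pi_n$, hence $\pi_n\in\{0,n\}$; you instead note that $p_n=cn$ with $c=p_{n_0}/n_0$ constant, so $c=p_{n+1}-p_n\in\Z\cap[0,1]=\{0,1\}$. Both are valid, and yours is marginally more elementary in that it avoids appealing to primes. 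You also spell out the Zariski-density/Euclidean-continuity step and the direct-sum bootstrapping needed to pass from $\rr\succeq0$ on the invertibility loci $\mathcal{O}_n$ (for $n\ge n_0$) to $\rr\succeq0$ on all of $\hdom\rr$, which the paper leaves implicit; that is a worthwhile clarification since $\hdom_n\rr$ can be nonempty while $\mathcal{O}_n$ is empty for small $n$.
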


\begin{proof}
(i)$\Rightarrow$(ii) If $\pm\rr=\sum_i\rr_i\rr_i^*$, then $\pm\rr(X)\succeq0$ for all $X\in\hdom\rr$.
\\
(ii)$\Rightarrow$(i) Let $\cO_n=\hdom_n\rr\cap\hdom_n\rr^{-1}$. By Remark \ref{r:dom} there is $n_0\in\N$ such that $\cO_n\neq\emptyset$ for all $n\ge n_0$. Suppose that $\rr$ has constant signature on $\cO_n$ for each $n\ge n_0$, i.e.,  $\rr(X)$ has $\pi_n$ positive eigenvalues for every $X\in\cO_n$. 
Since $\cO_k\oplus\cO_\ell\subset \cO_{k+\ell}$ for all $k,\ell\in\N$, we have
\begin{equation}\label{e:dir}
n\pi_m=\pi_{mn}=m\pi_n
\end{equation}
for all $m,n\ge n_0$. 
If $\pi_{n'}=n'$ for some $n'\ge n_0$, then $\pi_n=n$ for all $n\ge n_0$ by \eqref{e:dir}, so $\rr\succeq 0$ on $\cO_n$ for every $n$. Thus $\rr=\sum_i\rr_i\rr_i^*$ by Theorem \ref{t:main}. Analogous conclusion holds if $\pi_{n'}=0$ for some $n'\ge n_0$. However, \eqref{e:dir} excludes any alternative: if $n_0\le m< n$ and $n$ is a prime number, then $0<\pi_n<n$ contradicts \eqref{e:dir}.
\end{proof}

\subsection{Positivity and invariants}

Let $G$ be a subgroup of the unitary group $\un$. The action of $G$ on $\C^d$ induces a linear action of $G$ on $\rx$. If $G$ is finite and solvable, then the subfield of $G$-invariants $\rx^G$ is finitely generated \cite[Theorem 1.1]{KPPV} and in many cases again a free skew field \cite[Theorem 1.3]{KPPV}. Furthermore, we can now extend \cite[Corollary  6.6]{KPPV} to invariant noncommutative rational functions with singularities.

\begin{cor}\label{c:inv}
Let $G\subset \un$ be a finite solvable group. Then there exists $R_G \in \GL_{|G|}(\rx)$ with the following property. If $\rr\in\rx^G$ and $L$ is a hermitian monic pencil of size $e$, then 
$\rr\succeq0$ on $\hdom \rr\cap \cD(L)$ if and only if $\rr\in\QM_{\rx^G}(L_G)$, where
$$L_G = 
R_G^*R_G\oplus(R_G\otimes I)^*\left(\bigoplus_{g\in G} L^g \right)(R_G\otimes I)
\in \opm_{|G|(e+1)}(\rx^G).$$
\end{cor}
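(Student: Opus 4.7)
The plan is to bootstrap Theorem~\ref{t:main} (which produces a certificate in the full skew field $\rx$) to a certificate with coefficients in $\rx^G$, using $G$-averaging together with the Galois-type structure of the extension $\rx/\rx^G$ from~\cite{KPPV}. The present corollary is the singular upgrade of \cite[Corollary~6.6]{KPPV}; the transfer machinery between $\rx$ and $\rx^G$ can be reused essentially verbatim, and the only new ingredient will be Theorem~\ref{t:main} in place of its regular analogue.

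For the nontrivial (``only if'') direction, I would first invoke Proposition~\ref{p:rep} to pick a representative $r\in\rr$ with $\hdom r = \hdom\rr$, and then apply Theorem~\ref{t:main} with $L$ to produce
$$
\rr = \sum_i \rs_i^*\rs_i + \sum_j \rv_j^* L \rv_j \qquad \text{in } \rx.
$$
Because $\rr\in\rx^G$, applying the Reynolds averaging $|G|^{-1}\sum_{g\in G}g(\cdot)$ fixes the left-hand side, yielding
$$
\rr = \frac{1}{|G|}\sum_{g\in G}\!\left(\sum_i(g\rs_i)^*(g\rs_i) + \sum_j(g\rv_j)^* L^g(g\rv_j)\right).
$$

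To recognise the right-hand side as an element of $\QM_{\rx^G}(L_G)$, I would invoke the free right $\rx^G$-module structure of $\rx$ from~\cite{KPPV}: since $G$ is finite solvable, $\rx$ is a free right $\rx^G$-module of rank $|G|$, and a basis $u_1,\dots,u_{|G|}$ can be chosen so that $R_G\in\GL_{|G|}(\rx)$ with entries $(R_G)_{g,p}=g(u_p)$ is invertible. Decomposing each $\rs_i = \sum_p u_p f_{ip}$ with $f_{ip}\in\rx^G$ and each $\rv_j = \sum_p u_p F_{jp}$ with $F_{jp}\in(\rx^G)^e$, a direct computation using the identities $(R_G^*R_G)_{p,q}=\sum_g g(u_p^*u_q)\in\rx^G$ and its $L$-weighted analogue collapses the two averaged sums into $f_i^*(R_G^*R_G)f_i$ and $F_j^*(R_G\otimes I)^*(\bigoplus_g L^g)(R_G\otimes I)F_j$, respectively. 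Summing over $i$ and $j$ places $\rr$ in $\QM_{\rx^G}(L_G)$.

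The reverse direction is the easy one: the factorisation $L_G = S^*(I_{|G|}\oplus\bigoplus_g L^g)S$ with $S=R_G\oplus(R_G\otimes I)$ turns any element of $\QM_{\rx^G}(L_G)$ into a hermitian sum of squares plus $L^g$-weighted squares in $\rx$, from which $G$-invariance of $\rr$ and the averaging argument of \cite[Corollary~6.6]{KPPV} extract $\rr\succeq 0$ on $\cD(L)\cap\hdom\rr$. The main obstacle I anticipate is the combinatorial bookkeeping of how the basis expansions interact with the $G$-action so as to collapse into Kronecker products with~$R_G$; this is precisely the calculation underlying the construction of $R_G$ in~\cite{KPPV} and can be quoted wholesale in the singular setting.
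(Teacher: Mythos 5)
Your proposal is correct and takes essentially the same route as the paper: the paper's proof is the one-liner ``combine \cite[Corollary 6.4]{KPPV} and Theorem \ref{t:main},'' where \cite[Corollary 6.4]{KPPV} is exactly the transfer machinery ($G$-averaging plus the free right $\rx^G$-module decomposition encoded by $R_G$) that you reconstruct explicitly, and Theorem \ref{t:main} supplies the $\rx$-level certificate in place of the regular-case Positivstellensatz used in \cite{KPPV}. Your unpacked bookkeeping of the Reynolds operator and the identity $(R_G^*R_G)_{p,q}=\sum_g g(u_p)^*g(u_q)\in\rx^G$ matches what that corollary of \cite{KPPV} delivers, so the only genuinely new ingredient here is indeed Theorem \ref{t:main}, just as you anticipate.
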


\begin{proof}
Combine \cite[Corollary 6.4]{KPPV} and Theorem \ref{t:main}.
\end{proof}
	
\subsection{Real free skew field and other variations}\label{ss:real}
\def\rerx{\R\plangle x \prangle}
\def\real{{\rm re}}
\def\imag{{\rm im}}

In this subsection we explain how the preceding results apply to real free skew fields and their symmetric evaluations, and to another natural involution on a free skew field.

\begin{cor}[Real version of Theorem \ref{t:main}]\label{c:real}
Let $L$ be a symmetric monic pencil of size $e$ and $\rr\in\rerx$. Then $\rr(X)\succeq0$ for every $X\in\hdom \rr\cap\cD(L)$ if and only if $\rr\in\QM_{\rerx}(L)$.
\end{cor}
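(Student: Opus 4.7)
The forward direction is immediate from the definition of $\QM_{\rerx}(L)$: any element $\sum_j v_j^*(1\oplus L)v_j$ with $v_j\in\rerx^{e+1}$ evaluates to a positive semidefinite matrix at every point of $\hdom\rr\cap\cD(L)$.

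For the converse, the plan is to apply Theorem \ref{t:main} in the complex setting and then symmetrize under complex conjugation. The real free skew field $\rerx$ embeds naturally in $\rx$ (a formal rational expression with real coefficients is simultaneously a complex rational expression, and the equivalence relation is compatible with coefficient extension), and since real symmetric tuples are in particular complex hermitian, the positivity hypothesis transfers to $\rr$ viewed as an element of $\rx$. If one wishes to read $\hdom\rr$ in the real-symmetric sense, the upgrade to complex hermitian tuples is via the standard realification $X=A+iB\mapsto\left(\begin{smallmatrix}A & -B \\ B & A\end{smallmatrix}\right)$ applied componentwise, which carries complex hermitian tuples to real symmetric ones while preserving both evaluations of real-coefficient rational expressions and the sign of $L$.

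Theorem \ref{t:main} then yields a decomposition $\rr=\sum_i\rr_i^*\rr_i+\sum_j\rv_j^*L\rv_j$ with $\rr_i\in\rx$ and $\rv_j\in\rx^e$. Let $c\colon\rx\to\rx$ denote the $\R$-algebra automorphism that conjugates complex scalars and fixes each $x_k$; a check on generators shows that $c$ commutes with the involution $*$. Since $\rr\in\rerx$ and $L$ has real entries, $c(\rr)=\rr$ and $c(L)=L$, so averaging the decomposition with its image under $c$ produces
$$\rr=\tfrac12\sum_i\bigl(\rr_i^*\rr_i+c(\rr_i)^*c(\rr_i)\bigr)+\tfrac12\sum_j\bigl(\rv_j^*L\rv_j+c(\rv_j)^*Lc(\rv_j)\bigr).$$
Writing $\rr_i=a_i+ib_i$ and $\rv_j=u_j+iw_j$ with $a_i,b_i\in\rerx$ and $u_j,w_j\in\rerx^e$, a direct expansion using $(a\pm ib)^*=a^*\mp ib^*$ gives
$$\rr_i^*\rr_i+c(\rr_i)^*c(\rr_i)=2(a_i^*a_i+b_i^*b_i),\qquad \rv_j^*L\rv_j+c(\rv_j)^*Lc(\rv_j)=2(u_j^*Lu_j+w_j^*Lw_j),$$
placing $\rr$ in $\QM_{\rerx}(L)$ as desired.

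The only substantive content beyond invoking Theorem \ref{t:main} is the real-complex bookkeeping: verifying that $c$ commutes with $*$, that the embedding $\rerx\subset\rx$ is compatible with involutions and semidefiniteness, and (if needed) that realification transfers positivity correctly between the real-symmetric and complex-hermitian regimes. These are elementary given the structure of the free skew field, and the averaging identity above is the crux of the argument.
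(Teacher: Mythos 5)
Your argument follows the same route as the paper: invoke Theorem \ref{t:main} over $\C$ and then pass to real and imaginary parts. Averaging the decomposition with its image under the conjugation automorphism $c$ is literally the same as taking $\real(\cdot)$ of both sides, which is what the paper does, and your algebraic expansion of $\rr_i^*\rr_i+c(\rr_i)^*c(\rr_i)$ matches the paper's identity $\real(\rs^*\rs)=\real(\rs)^*\real(\rs)+\imag(\rs)^*\imag(\rs)$.

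The one point you slide past is the step ``writing $\rr_i=a_i+ib_i$ with $a_i,b_i\in\rerx$.'' Theorem \ref{t:main} hands you $\rr_i\in\rx$, and the existence of such a split is exactly the assertion that $\rx=\rerx+i\rerx$ (equivalently, that the fixed skew field of $c$ is $\rerx$ and not something larger). This is true, but it is not free: one proves, for instance, that $\rerx+i\rerx$ is a domain that is two-dimensional as a left $\rerx$-module and hence a division ring, whence it must be all of $\rx$. The paper deliberately avoids needing this fact by tracking \emph{where} the decomposition lives: Theorem \ref{t:main} produces $\rr_i\in V_{2\tau(r)+1}$, and since $\rr\in\rerx$ one can choose a representative $r$ with real scalars, which makes each $V_\ell$ the complexification of a real span of subexpressions; the real/imaginary split of the $\rr_i$ is then immediate by construction rather than by a structural theorem about $\rx$. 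So your proof is correct in spirit, but you should either cite or prove $\rx=\C\otimes_\R\rerx$, or (cheaper, and what the paper does) note that the $\rr_i,\rv_j$ may be taken in $\C\otimes_\R\rerx$ because the truncated spaces $V_\ell$ are generated over $\C$ by real expressions. Your ``realification'' remark is fine but tangential, since the statement already reads $\hdom\rr$ in the complex-hermitian sense.
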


\begin{proof}
If $\rr\in\rerx$ and $\rr\succeq0$ on $\hdom\rr\cap\cD(L)$, 
then $\rr\in\QM_{\C\otimes\rerx}(L)$ by Theorem \ref{t:main} because the complex vector spaces $V_\ell$ are spanned with functions given by subexpressions of some $r\in\rr$, and we can choose $r$ in which only real scalars appear. For $\rs\in \C\otimes \rerx$ we define 
$\real (\rs) = \frac12(\rs+\overline{\rs})$ and 
$\imag (\rs) = \frac{i}{2}(\overline{\rs}-\rs)$ in $\rerx$.
If
$$\rr=\sum_j \rs_j^*\rs_j+\sum_k \rv_k^* L\rv_k$$
for $\rs_j\in(\C\otimes \rerx)$ and $\rv_k\in(\C\otimes \rerx)^e$, then
$$
\rr=\real(\rr)=\sum_j \left(\real(\rs_j)^*\real(\rs_j)+\imag(\rs_j)^*\imag(\rs_j)\right)
+\sum_k \left(\real(\rv_k)^*L\,\real(\rv_k)+\imag(\rv_k)^*L\,\imag(\rv_k)\right)
$$
and so $\rr\in\QM_{\rerx}(L)$.
\end{proof}

Given $\rr\in\rerx$ one might prefer to consider only the tuples of real symmetric matrices in the domain of $\rr$, and not the whole $\hdom \rr$. Since there exist $*$-embeddings $\opm_n(\C)\hookrightarrow\opm_{2n}(\R)$, evaluations on tuples of real symmetric $2n\times 2n$ matrices carry at least as much information as evaluations on tuples of hermitian $n\times n$ matrices. Consequently, all dimension-independent statements in this paper also hold if only symmetric tuples are considered. However, it is worth mentioning that for $\rr\in\rerx$, it can happen that $\dom_n \rr$ contains no tuples of symmetric matrices for all odd $n$, e.g. if $\rr=(x_1x_2-x_2x_1)^{-1}$.

\def\rxx{\C\plangle x,x^* \prangle}
\def\rerxx{\R\plangle x,x^* \prangle}

Another commonly considered free skew field with involution is $\rxx$, generated with $2d$ variables $x_1,\dots,x_d,x_1^*,\dots,x_d^*$, which is endowed with the involution $*$ that swaps $x_j$ and $x_j^*$. Elements of $\rxx$ can be evaluated on $d$-tuples of complex matrices. The results of this paper also directly apply to $\rxx$ and such evaluations 
because $\rxx$ is freely generated by elements $\frac12(x_j+x_j^*),\frac{i}{2}(x_j^*-x_j)$ which are fixed by $*$. Finally, as in Corollary \ref{c:real}
we see that a suitable analog of Theorem \ref{t:main} also holds
for $\rerxx$ and evaluations on $d$-tuples of real matrices.

\subsection{Examples of non-convex Positivstellens\"atze}\label{ss:noncvx}

Given $\mm=\mm^*\in\opm_{\ell}(\rx)$ let
$$\cD(\mm) = \bigcup_{n\in\N}\cD_n(\mm),
\qquad \text{where}\quad\cD_n(\mm)=\{X\in\hdom_n \mm \colon \mm(X)\succeq0 \},$$
be its {\bf positivity domain}. Here, the domain of $\mm$ is the intersection of domains of its entries.

\begin{prop}\label{p:change}
Let $\mm=\mm^*\in \opm_{\ell}(\rx)$ and assume there exist a hermitian monic pencil $L$ of size $e\ge \ell$, a $*$-automorphism $\varphi$ of $\rx$, and $A\in\GL_e(\rx)$ such that
\begin{equation}\label{e:change}
\varphi(\mm)\oplus I = A^*LA.
\end{equation}
If $\rr\in\rx$, then $\rr\succeq0$ on $\hdom\rr\cap\cD(\mm)$ if and only if $\rr\in\QM_{\rx}(\mm)$.
\end{prop}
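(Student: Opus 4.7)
The plan is to reduce the proposition to Theorem~\ref{t:main} via the $*$-automorphism $\varphi$ and the congruence \eqref{e:change}, matching the quadratic modules algebraically and the positivity domains birationally.

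\emph{Algebraic identification.} Since $A\in\GL_e(\rx)$, the substitution $v\mapsto Av$ yields $\QM_{\rx}(A^*LA)=\QM_{\rx}(L)$. Since adjoining an identity block contributes only sums of hermitian squares, $\QM_{\rx}(\varphi(\mm)\oplus I)=\QM_{\rx}(\varphi(\mm))$. Since $\varphi$ is a $*$-automorphism of $\rx$, it maps hermitian squares to hermitian squares and conjugates $\mm$-terms into $\varphi(\mm)$-terms, giving $\varphi(\QM_{\rx}(\mm))=\QM_{\rx}(\varphi(\mm))$. Chaining these through \eqref{e:change}:
$$\varphi(\QM_{\rx}(\mm))=\QM_{\rx}(\varphi(\mm))=\QM_{\rx}(\varphi(\mm)\oplus I)=\QM_{\rx}(A^*LA)=\QM_{\rx}(L).$$
Hence $\rr\in\QM_{\rx}(\mm)$ if and only if $\varphi(\rr)\in\QM_{\rx}(L)$, and by Theorem~\ref{t:main} the latter is equivalent to $\varphi(\rr)\succeq 0$ on $\hdom\varphi(\rr)\cap\cD(L)$. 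Thus the task reduces to showing that this condition is equivalent to $\rr\succeq 0$ on $\hdom\rr\cap\cD(\mm)$.

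\emph{Pointwise transport via $\varphi$.} For each $n$ let $\cU_n\subseteq\her{n}^d$ be the Zariski-open set on which $\varphi(x_j)$, $\varphi^{-1}(x_j)$, $A$, $\mm$, $\varphi(\mm)$ are all defined; this is nonempty and Zariski-dense for $n$ sufficiently large by Remark~\ref{r:dom} applied to suitable linear representations. For $Y\in\cU_n$, set $X_j:=\varphi(x_j)(Y)\in\her{n}$ (hermiticity follows from $\varphi(x_j)=\varphi(x_j)^*$). The composition identity for evaluation homomorphisms, $\rs(X)=\varphi(\rs)(Y)$ for every $\rs\in\rx$ defined at $X$, yields $\rr(X)=\varphi(\rr)(Y)$ and $\mm(X)=\varphi(\mm)(Y)$. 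Evaluating \eqref{e:change} at $Y$ then gives
$$\mm(X)\oplus I = A(Y)^*L(Y)A(Y),$$
so $L(Y)\succeq 0 \iff \mm(X)\succeq 0$, using invertibility of $A(Y)$. Therefore $Y\mapsto X$ (with inverse $X\mapsto\varphi^{-1}(x)(X)$) is a bijection between $\cU_n\cap\cD(L)$ and $\cU_n\cap\cD(\mm)$ that interchanges the positivities of $\varphi(\rr)$ and $\rr$.

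\emph{Extension by continuity.} To extend the positivity match off the dense $\cU_n$-locus, given $Y\in\hdom\varphi(\rr)\cap\cD(L)$ I would approximate by $Y_{\delta,\ve}:=(1-\delta)Y+\ve Y_0$ with $Y_0\in\her{n}^d$ generic and $\delta,\ve>0$ small. Since $L$ is monic, $L(Y_{\delta,\ve})=(1-\delta)L(Y)+\delta I+\ve(L(Y_0)-I)\succ 0$ once $\ve$ is small compared with $\delta$, while generic $Y_0$ forces $Y_{\delta,\ve}\in\cU_n\cap\hdom\varphi(\rr)$ for sufficiently small parameters. Continuity of $\varphi(\rr)$ on its domain propagates $\varphi(\rr)(Y_{\delta,\ve})\succeq 0$ to $\varphi(\rr)(Y)\succeq 0$; the symmetric argument using $\varphi^{-1}$ handles the $\rr$-side. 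The main obstacle is exactly this extension: confirming that $\cU_n$ is Zariski-dense in $\her{n}^d$ for every relevant $n$ (which ultimately rests on a joint linear-representation count for $\varphi(x_j),\varphi^{-1}(x_j),A,\mm,\varphi(\mm)$), and then controlling the approximating tuples so they lie simultaneously inside $\cU_n$, $\hdom\varphi(\rr)$, and $\cD(L)$, so that the passage to the limit is legitimate.
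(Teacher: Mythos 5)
Your overall strategy is the same as the paper's: reduce to Theorem~\ref{t:main} by moving everything through $\varphi$ and the congruence \eqref{e:change}, using the birational change of variable $Y\mapsto\varphi(x)(Y)$ to transport positivity pointwise, and then extending off the Zariski-dense good locus. The algebraic identification $\QM_{\rx}(\mm)=\varphi^{-1}\bigl(\QM_{\rx}(L)\bigr)$ is correct and is exactly what the paper uses, and your pointwise transport argument on $\cU_n$ is fine.

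The gap is in the extension step, and it is not where you flagged it. The perturbation $Y_{\delta,\ve}=(1-\delta)Y+\ve Y_0$ works on the $\cD(L)$ side precisely because $L$ is a \emph{monic affine} pencil, so $L(Y_{\delta,\ve})=(1-\delta)L(Y)+\delta I+\ve(L(Y_0)-I)$ and $\cD_n(L)$ is convex. But "the symmetric argument using $\varphi^{-1}$" does not exist: $\mm$ is a nonlinear rational matrix, $\mm\bigl((1-\delta)X+\ve X_0\bigr)$ is not an affine combination of $\mm(X)$, $I$, and $\mm(X_0)$, and $\cD_n(\mm)$ is typically \emph{not} convex (the proposition is advertised exactly as a non-convex Positivstellensatz, cf.\ Example~\ref{ex:posss}). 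So given $X\in\hdom\rr\cap\cD_n(\mm)$ outside the good locus $\cU_n'$, you have no perturbation that stays inside $\cD_n(\mm)$. What is actually needed — and what the paper records as the key geometric observation, derived from \eqref{e:change}, Remark~\ref{r:dom}, and the convexity of $\cD_n(L)$ — is that for all but finitely many $n$ the sets $\cD_n(\mm)$ and $\cD_n(\varphi(\mm))$ coincide with the closures of their interiors. Granting that, any $X\in\hdom\rr\cap\cD_n(\mm)$ is a limit of points in $\mathrm{int}\,\cD_n(\mm)\cap\hdom_n\rr\cap\cU_n'$, and only then does continuity close the argument. Your proposal never establishes (or invokes) this property of $\cD_n(\mm)$, and the obstacle you single out — Zariski-density of $\cU_n$ for relevant $n$ — is actually the easy part (it follows from Remark~\ref{r:dom} together with the direct-sum trick to restrict attention to large $n$); the genuine difficulty is controlling $\cD_n(\mm)$ itself.
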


\begin{proof}
The relation \eqref{e:change}, Remark \ref{r:dom} and the convexity of $\cD_n(L)$ imply that the sets $\cD_n(\varphi(\mm))$ and $\cD_n(\mm)$ have the same closures as their interiors in the Euclidean topology for all but finitely many $n$. Therefore
\begin{align*}
\rr|_{\hdom\rr\cap\cD(\mm)}\succeq0 
&\quad\Leftrightarrow\quad
\varphi(\rr)|_{\hdom\varphi(\rr)\cap\cD(\varphi(\mm))}\succeq0 \\
&\quad\Leftrightarrow\quad
\varphi(\rr)|_{\hdom\varphi(\rr)\cap\cD(L)}\succeq0 \\
&\quad\Leftrightarrow\quad
\varphi(\rr)\in\QM_{\rx}(L) \\
&\quad\Leftrightarrow\quad
\rr\in\QM_{\rx}(\varphi^{-1}(L))=\QM_{\rx}(\mm)
\end{align*}
by Theorem \ref{t:main} and \eqref{e:change}.
\end{proof}

The following example presents a family of quadratic noncommutative polynomials $q=q^*\in\pxx$ that admit a rational Positivstellensatz on their (not necessarily convex) positivity domains $\cD(q)=\{X\colon q(X,X^*)\succeq0 \}$.

\begin{exa}\label{ex:posss}
Given a linearly independent set $\{a_0,\dots,a_n\}\subset\spa_\C\{1,x_1,\dots,x_d\}$ let
$$q =a_0^*a_0-a_1^*a_1-\dots-a_n^*a_n \in \pxx.$$
One might say that $q$ is a hereditary quadratic polynomial of positive signature 1.
Note that $\cD_1(q)$ is not convex if $a_0\notin \C$.
Since $a_0,\dots,a_n$ are linearly independent affine polynomials in $\px$ (and in particular $n\le d$), 
there exists a linear fractional automorphism $\varphi$ on $\rx$ such that 
$\varphi^{-1}(x_j)=a_ja_0^{-1}$ for $1\le j \le n$. We extend $\varphi$ uniquely to a $*$-automorphism on $\rxx$.
Then
$$\varphi(a_0)^{-*}\varphi(q)\varphi(a_0)^{-1}=1-x_1^*x_1-\cdots-x_n^*x_n$$
and thus $\varphi(q)\oplus I_n=A^*LA$ where
$$
L=\begin{pmatrix}
1 & x_1^* & \cdots & x_n^* \\
x_1 & \ddots & & \\
\vdots & & \ddots & \\
x_n & & & 1 
\end{pmatrix},
\qquad
A=\begin{pmatrix}
\varphi(a_0) &  & &  \\
-x_1 & 1 & & \\
\vdots & & \ddots & \\
-x_n & & & 1 
\end{pmatrix}
$$
Therefore
$\rr\succeq0$ on $\hdom\rr\cap\cD(q)$ if and only if $\rr\in\QM_{\rxx}(q)$ for every $\rr\in\rxx$ by Proposition \ref{p:change}.

For example, the polynomial $x_1^*x_1-1$ if of the type discussed above, and thus admits a rational Positivstellensatz. In particular,
$$x_1x_1^*-1 = (x_1-x_1^{-*})(x_1^*-x_1^{-1})+x_1^{-*}(x_1^*x_1-1)x_1^{-1} \in \QM_{\rxx}(x_1^*x_1-1).$$
On the other hand, we claim that $x_1x_1^*-1\notin \QM_{\pxx}(x_1^*x_1-1)$ (cf. \cite[Example 4]{HM}). If $x_1x_1^*-1$ were an element of $\QM_{\pxx}(x_1^*x_1-1)$, then the implication
$$S^*S-I\succeq0 \quad\Rightarrow\quad SS^*-I\succeq0$$
would be valid for every operator $S$ on an infinite-dimensional Hilbert space; however it fails if $S$ is the forward shift operator on $\ell^2(\N)$. A different Positivstellensatz (polynomial, but with a slack variable) for hereditary quadratic polynomials is given in \cite[Corollary 4.6]{HKV}.
\end{exa}

\subsection{Eigenvalue optimization}\label{ss:opt}

Theorem \ref{t:main} is also essential for optimization of noncommutative rational functions. Namely, it implies that finding the eigenvalue supremum or infimum of a noncommutative rational function on a free spectrahedron is equivalent to solving a semidefinite program \cite{BPT}. This equivalence was stated in \cite[Section 5.2.1]{KPV} for regular noncommutative rational functions; the novelty is that Theorem \ref{t:main} now confirms its validity for noncommutative rational functions with singularities. 

Let $L$ be a hermitian monic pencil of size $e$, and let $\rr=\rr^*\in\rx$. Suppose we are interested in
$$\mu_* = \sup_{X\in\hdom \rr\cap \cD(L)}\big[\text{maximal eigenvalue of }\rr(X)\big].$$
Choose some $r\in\rr$ (the simpler representative the better) and let $\ell=2\tau(r)+1$. Theorem \ref{t:main} then implies that
\begin{equation}\label{e:sup}
\mu_* = \inf\left\{
\mu\in\R\colon \mu-\rr= 
\sum_{i=1}^M \rs_i^*\rs_i+\sum_{j=1}^N \rv_j^* L\rv_j\colon \rs_i \in V_\ell, \rv_j\in V_\ell^e
\right\}
\end{equation}
where we can take $M=\dim \hV_{2\ell}+1$ and $N=\dim \hV_{2\ell+1}+1$ by Carath\'eodory's theorem \cite[Theorem I.2.3]{Bar}. The right-hand side of \eqref{e:sup} can be stated as a semidefinite program \cite{WSV,BPT}. Concretely, to determine the global (no $L$) eigenvalue supremum of $\rr$, one solves the semidefinite program
\begin{equation}\label{e:sdp}
\begin{split}
\min_H & \quad \mu \\
\text{subject to} & \quad \mu-\rr = \vec{w}^*H\vec{w}, \\
& \quad H\succeq 0
\end{split}
\end{equation}
where $H$ is a $(\dim V_\ell)\times (\dim V_\ell)$ hermitian matrix and $\vec{w}$ is a vectorized basis of $V_\ell$. For constrained eigenvalue optimization ($L$ is present), one can set up a similar semidefinite program using localizing matrices \cite[Definition 1.41]{BKP}.
	
\section{More on domains}\label{s6}

In this section we prove two new results on (hermitian) domains.
One of them is the aforementioned Proposition \ref{p:rep}, which states that every noncommutative rational function admits a representative with the largest hermitian domain. The other one is Proposition \ref{p:canc} on cancellation of singularities of noncommutative rational functions.

\subsection{Representatives with the largest hermitian domain}\label{ss:rep}

We will require a technical lemma about matrices over formal rational expressions and their hermitian domains.
A representative of a matrix $\mm$ over $\rx$ is a matrix over $\rex$ of representatives of $\mm_{ij}$, and the domain of a matrix over $\rex$ is the intersection of domains of its entries.

\begin{lem}\label{l:phd}
Let $m$ be an $e\times e$ matrix over $\rex$ such that $\mm\in\GL_e(\rx)$. Then there exists $s\in\mm^{-1}$ such that $\hdom m\cap\hdom \mm^{-1}=\hdom s$.
\end{lem}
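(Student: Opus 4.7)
The plan is to express $\mm^{-1}$ via a representative that routes all inversions through the formally positive matrix $A:=m^*m$, whose evaluation at any hermitian tuple in $\hdom m\cap\hdom\mm^{-1}$ is positive definite and therefore admits a Schur-complement inversion without ever hitting a singular pivot.

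Concretely, form $A=m^*m$ as a matrix over $\rex$ with $A_{ij}=\sum_k m_{ki}^*m_{kj}$, so $\hdom A\supseteq\hdom m$ and $A=\mm^*\mm\in\GL_e(\rx)$. Build a matrix $t$ of formal rational expressions representing $A^{-1}$ by iterating the block inversion formula along the chain of leading principal submatrices $A^{(1)}\subseteq\cdots\subseteq A^{(e)}=A$: having constructed $(A^{(k)})^{-1}$, partition $A^{(k+1)}=\begin{pmatrix}A^{(k)}&B\\B^*&d\end{pmatrix}$, set $S=d-B^*(A^{(k)})^{-1}B$, and use
$$\bigl(A^{(k+1)}\bigr)^{-1}=\begin{pmatrix}(A^{(k)})^{-1}+(A^{(k)})^{-1}BS^{-1}B^*(A^{(k)})^{-1} & -(A^{(k)})^{-1}BS^{-1}\\ -S^{-1}B^*(A^{(k)})^{-1} & S^{-1}\end{pmatrix}.$$
Set $s:=tm^*$. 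Since $s=(\mm^*\mm)^{-1}\mm^*=\mm^{-1}$ in $\opm_e(\rx)$, the matrix $s$ is a representative of $\mm^{-1}$.

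It remains to check $\hdom s=\hdom m\cap\hdom\mm^{-1}$. For $\subseteq$, an easy induction on the Schur-complement recursion shows that every entry of $t$ contains every entry of $A$ as a subexpression, hence every entry of $m$ and $m^*$; combined with $\hdom m=\hdom m^*$ on hermitian tuples, this gives $\hdom s\subseteq\hdom m$, and $\hdom s\subseteq\hdom\mm^{-1}$ holds because $s$ represents $\mm^{-1}$. For $\supseteq$, take a hermitian $X\in\hdom m\cap\hdom\mm^{-1}$; then $m(X)$ is defined, and it is invertible because $\mm^{-1}(X)$ exists and $\mm(X)=m(X)$, so $A(X)=m(X)^*m(X)\succ 0$. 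Every leading principal submatrix $A^{(k)}(X)$ is then positive definite and hence invertible, and each Schur complement $S(X)$ appearing in the recursion is positive definite by the standard block-positivity argument. Therefore every inversion occurring in $t$ is valid at $X$, so $t(X)$ and $s(X)=t(X)m(X)^*$ are both defined.

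The decisive observation is that the choice $A=m^*m$ turns every potential singularity in $t$ into the inversion of a positive definite matrix at any hermitian $X$ with $\mm(X)$ invertible; once this is identified, the proof is a bookkeeping induction. I expect verifying the subexpression claim in the $\subseteq$ direction—that every entry of $m$ shows up in $s$—to be the most fiddly step, but it follows directly from the recursive structure of the Schur-complement formula.
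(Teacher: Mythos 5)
Your proof is correct and takes essentially the same route as the paper's: both express $\mm^{-1}=(\mm^*\mm)^{-1}\mm^*$ and invert $\mm^*\mm$ by an iterated Schur-complement scheme, exploiting the fact that $\mm^*\mm$ evaluates to a positive definite matrix at every hermitian tuple where $\mm$ is invertible (so every pivot stays positive definite), and controlling the $\subseteq$ inclusion by making sure the resulting representative has all entries of $m$ and $m^*$ among its subexpressions. The only difference is cosmetic: the paper runs the induction on $e$ by taking the Schur complement of the $(1,1)$-entry of $\mm^*\mm$ and recursing on the $(e-1)\times(e-1)$ complement, whereas you build up through the chain of leading principal submatrices with a scalar Schur complement at each step.
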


\begin{proof}
Throughout the proof we reserve italic letters ($m,c$, etc.) for matrices over $\rex$ and bold letters ($\mm,\rc$, etc.) for the corresponding matrices over $\rx$.
We prove the statement by induction on $e$. If $e=1$, then $m^{-1}$ is the desired expression. Assume the statement holds for matrices of size $e-1$, and let $c$ be the first column of $m$. 
Then $\hdom c\supseteq\hdom m$ and $c(X)$ is of full rank for every $X\in\hdom m\cap\hdom\mm^{-1}$.
Hence
\begin{equation}\label{e:dom1}
\hdom (c^*c)^{-1} \supseteq\hdom m\cap\hdom\mm^{-1}.
\end{equation}
Let $\widehat{\mm}$ be the Schur complement of $\rc^*\rc$ in $\mm^*\mm$. Note that the entries of $\widehat{\mm}$ are polynomials in entries of $\mm^*\mm$ and $(\rc^*\rc)^{-1}$; lifting these polynomials to formal expressions in $\rex$ we obtain a representative $\widehat{m}\in\widehat{\mm}$ such that
\begin{equation}\label{e:dom2}
\hdom\widehat{m} =\hdom m\cap \hdom (c^*c)^{-1}.
\end{equation}
If $X\in\hdom m$, then $m(X)$ is invertible if and only if $(c^*c)(X)$ and $\widehat{m}(X)$ are invertible.
Thus by \eqref{e:dom1} and \eqref{e:dom2} we have
\begin{equation}\label{e:dom3}
\hdom m \cap \hdom \mm^{-1} = \hdom m \cap (\hdom (c^*c)^{-1}\cap\hdom \widehat{\mm}^{-1}).
\end{equation}
Since $\hat{m}$ is an $(e-1)\times (e-1)$ matrix, by the induction hypothesis there exists $\widehat{s}\in\widehat{\mm}^{-1}$ 
such that $\hdom \widehat{m}\cap\hdom \widehat{\mm}^{-1}=\hdom \widehat{s}$.
By \eqref{e:dom3} we have
\begin{equation}\label{e:dom4}
\hdom m \cap \hdom \mm^{-1} = \hdom m \cap (\hdom (c^*c)^{-1}\cap\hdom \widehat{s}).
\end{equation}
The entries of $(\mm^*\mm)^{-1}$ can be represented by expressions $s'_{ij}$ which are sums and products of expressions $m_{ij},m_{ij}^*,(c^*c)^{-1},\widehat{s}_{ij}$. Thus $s'\in (\mm^*\mm)^{-1}$ satisfies
$$\hdom m \cap \hdom \mm^{-1}=\hdom s'$$
by \eqref{e:dom4}. 
Finally, $s=s' m^*$ is the desired expression because $\mm^{-1}=(\mm^*\mm)^{-1}\mm^*$.
\end{proof}

\begin{proof}[Proof of Proposition \ref{p:rep}]
Let $\rr\in\rx$. Let $e\in\N$, an affine matrix pencil $M$ of size $e$ and $u,v\in\C^e$ be such that $\rr=u^* M^{-1}v$ in $\rx$, and $e$ is minimal. Recall that  $\dom\rr=\bigcup_{r\in\rr}\dom r$. By comparing $(u,M,v)$ with linear representations of representatives of $\rr$ as in \cite[Theorem 1.4]{CR} it follows that
\begin{equation}\label{e:pv}
\dom\rr\subseteq\bigcup_{n\in\N}\left\{X\in\mat{n}^d: \det M(X)\neq0 \right\}.
\end{equation}
Since $M$ contains no inverses, it is defined at every matrix tuple; thus by Lemma \ref{l:phd} there is a representative of $M^{-1}$ whose hermitian domain equals $\{X=X^*\colon \det M(X)\neq0 \}$. Since $\rr$ is a linear combination of the entries in $M^{-1}$, there exists $r\in\rr$ such that $\hdom r=\hdom \rr$ by \eqref{e:pv}.
\end{proof}

\begin{exa}\label{ex:hdom}
The domain of $\rr\in\rx$ given by the expression $(x_4-x_3x_1^{-1}x_2)^{-1}$ equals
$$\dom\rr = \bigcup_{n\in\N} \left\{X\in\mat{n}^4 \colon
\det \begin{pmatrix}
X_ 1 & X_2 \\ X_3 & X_4
\end{pmatrix}\neq0
 \right\}$$
and $\dom r\subsetneq \dom \rr$ for every $r\in\rr$ by \cite[Example 3.13]{Vol}.

Following the proof of Proposition \ref{p:rep} and Lemma \ref{l:phd} let $\mm=\left(\begin{smallmatrix}x_1 & x_2 \\ x_3 & x_4\end{smallmatrix}\right)$.
Then
$$\mm^*\mm=\begin{pmatrix}x_1^2+x_3^2 & x_1x_2+x_3x_4 \\ x_2x_1+x_4x_3 & x_2^2+x_4^2\end{pmatrix}$$
and the Schur complement of $\mm^*\mm$ with respect to the $(1,1)$-entry equals
$$\widehat{\mm}=x_2^2+x_4^2-(x_2x_1+x_4x_3)(x_1^2+x_3^2)^{-1}(x_1x_2+x_3x_4).$$
Since
$$\mm^{-1}=(\mm^*\mm)^{-1}\mm^*=
\begin{pmatrix}
\star & \star \\
-\widehat{\mm}^{-1}(x_2x_1+x_4x_3)(x_1^2+x_3^2)^{-1} & \widehat{\mm}^{-1}
\end{pmatrix}
\begin{pmatrix}
\star & x_3 \\
\star & x_4
\end{pmatrix}$$
and
$$\rr = \begin{pmatrix}0 & 1 \end{pmatrix}\mm^{-1}\begin{pmatrix} 0\\ 1\end{pmatrix}$$
we conclude that the formal rational expression
$$\left(x_2^2+x_4^2-(x_2x_1+x_4x_3)(x_1^2+x_3^2)^{-1}(x_1x_2+x_3x_4)\right)^{-1}
\left(x_4-(x_2x_1+x_4x_3)(x_1^2+x_3^2)^{-1}x_3\right)$$
represents $\rr$, and its hermitian domain coincides with $\hdom \rr$. Of course, the expression $(x_4-x_3x_1^{-1}x_2)^{-1}$ is a much simpler representative of $\rr$.
\end{exa}

\subsection{Cancellation of singularities}

In the absence of left ideals in skew fields, the following proposition serves as a rational analog of Bergman's Nullstellensatz for noncommutative polynomials \cite[Theorem 6.3]{HM}. The proof below omits some of the details since it is a derivate of the proof of Theorem \ref{t:main}.

\begin{prop}\label{p:canc}
The following are equivalent for $\rr,\rs\in\rx$.
\begin{enumerate}[(i)]
\item $\ker\rr(X)\subseteq\ker\rs(X)$ for all $X\in\dom\rr\cap\dom\rs$;
\item $\dom (\rs\rr^{-1})\supseteq \dom\rr\cap\dom\rs$.
\end{enumerate}
\end{prop}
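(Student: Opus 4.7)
The easy direction $(\mathrm{ii})\Rightarrow(\mathrm{i})$ is direct. I would pick $X\in\dom\rr\cap\dom\rs$ and $v\in\ker\rr(X)$, and fix representatives $r\in\rr$, $s\in\rs$ and (using $(\mathrm{ii})$) $t\in\rs\rr^{-1}$ all defined at $X$. Then $tr$ represents $\rs$ in $\rx$ and has $X$ in its domain, so by the defining equivalence relation on $\rex$ one has $(tr)(X)=s(X)$; applied to $v$ this reads $\rs(X)v=t(X)\rr(X)v=0$.

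For the substantive direction $(\mathrm{i})\Rightarrow(\mathrm{ii})$, my plan is to argue by contradiction along the same template as the proof of Theorem \ref{t:main}, with Proposition \ref{p:main2} replacing Proposition \ref{p:main} since there is no hermitian structure to preserve. Suppose there is $X_0\in\dom\rr\cap\dom\rs$ that lies in $\dom t'$ for no $t'\in\rs\rr^{-1}$. Fix representatives $r\in\rr,s\in\rs$ and set $t:=sr^{-1}\in\rs\rr^{-1}$. Build the finite-dimensional subspaces $V_\ell\subseteq V\subseteq\rx$ and the multiplication tuple $\fX$ on $V$ from the subexpressions of $t,t^*$ exactly as in Section \ref{s4}, so that $\fX\in\dom_\infty q$ for every subexpression $q$ by Lemma \ref{l:fX}; in particular $\fX\in\dom_\infty t$.

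The crux is to assemble an infinite block tuple
\[
(\widetilde{X}^\infty)_j=\begin{pmatrix} (X_0)_j & Y''_j & 0 \\ Y'_j & 0 & 0 \\ 0 & 0 & \fX_j\end{pmatrix}\in\dom_\infty t
\]
with nonzero coupling blocks $Y',Y''$: block-diagonal or block-triangular extensions are ruled out at once because they would force $X_0\in\dom t$. Hypothesis $(\mathrm{i})$ at $X_0$ provides the essential input, namely a constant matrix $C$ with $\rs(X_0)=C\rr(X_0)$, from which one reads off the data needed to choose $Y',Y''$ so that the full-rank hypotheses of Theorem \ref{t:nonherm} hold for the linear-representation pencil of $t$. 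Proposition \ref{p:main2} then replaces the infinite $\fX$-block by a finite matrix $Z$, producing $\widetilde X\in\dom t$ with $X_0$ lodged in the top-left. Proposition \ref{p:gns}, applied at a depth $\ell\ge 2\tau(t)$, then lets one transport the identity $\rs(\widetilde X)=t(\widetilde X)\rr(\widetilde X)$ back to the $X_0$-block, where it assembles into a representative of $\rs\rr^{-1}$ defined at $X_0$, contradicting the choice of $X_0$.

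The main obstacle is the coupling-construction step itself. In Theorem \ref{t:main} the analogous step is bought cheaply by the scaling $Z\mapsto\ve Z$, $E\mapsto\ve E$, which exploits the openness of the positive semidefinite cone to secure $L(\widetilde X)\succeq 0$ without sacrificing invertibility. Here there is no positivity to lean on, and the extension theorem must carry the full weight; verifying that the couplings assembled from the matrix $C$ do meet the full-rank hypotheses of Theorem \ref{t:nonherm}, and that the resulting finite extension actually produces a representative of $\rs\rr^{-1}$ defined at $X_0$ rather than only at the enlarged $\widetilde X$, is where I expect essentially all the technical effort to concentrate.
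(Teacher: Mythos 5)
Your direction (ii)$\Rightarrow$(i) is fine and matches the paper. The plan for (i)$\Rightarrow$(ii), however, has a genuine gap. You aim to show $X_0\in\dom(\rs\rr^{-1})$ by first embedding $X_0$ as the upper-left block of a larger tuple $\widetilde X$ lying in $\dom t$ (via the extension machinery), and then claiming that Proposition \ref{p:gns} ``transports'' the identity $s(\widetilde X)=t(\widetilde X)r(\widetilde X)$ back to the $X_0$-block so that it ``assembles into a representative of $\rs\rr^{-1}$ defined at $X_0$.'' There is no such mechanism. Whether $X_0$ lies in $\dom(\rs\rr^{-1})$ is intrinsic — equivalently, whether the minimal linear-representation pencil of $\rs\rr^{-1}$ is invertible at $X_0$ — and it is not forced by $X_0$ being a corner block of some $\widetilde X$ in the domain: a pencil can be invertible at $\widetilde X$ while singular at the corner block $X_0$. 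Proposition \ref{p:gns} only identifies $q(\widetilde X)\rs$ with $\qq\rs$ for short words $\rs$; it produces no new representatives and says nothing about $X_0$ itself. There is also a structural mismatch: building $V$ from subexpressions of $t=sr^{-1}$ (rather than of $r$ and $s$) forces $\rr^{-1}\in V$, which is incompatible with the left-ideal construction the proof actually needs.

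The paper proves the contrapositive, and the idea you are missing is a quotient. Assume (ii) fails: then one can find $r\in\rr$, $s\in\rs$, $Y\in\dom r\cap\dom s$ with $\det r(Y)=0$ (if $r(Y)$ were invertible, $sr^{-1}$ would already be defined at $Y$). Build $V$ from the subexpressions of $r$ and $s$ only, and pass to $K=V/V\rr$. The left ideal $V\rr$ is proper precisely because $\det r(Y)=0$, and $\rs\notin V\rr$ precisely because (ii) fails. Left multiplication by $x_j$ gives operators $\fX_j$ on $K$ with $\fX\in\dom_\infty r\cap\dom_\infty s$. Applying Proposition \ref{p:main2} (Theorem \ref{t:nonherm} is not needed, as there is no hermitian coupling to preserve) yields a finite-dimensional tuple $X\in\dom r\cap\dom s$ agreeing with $\fX$ on a truncation $K_\ell$, and the analog of Proposition \ref{p:gns} gives $r(X)[1]=[\rr]=0$ while $s(X)[1]=[\rs]\neq0$. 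Thus $[1]\in\ker\rr(X)\setminus\ker\rs(X)$, contradicting (i). The quotient by $V\rr$ is exactly the device that puts $[1]$ in $\ker r(X)$ and converts the failure of (ii) into a failure of (i); your proposal has no counterpart to it, and your constant matrix $C$ from (i) at the single point $X_0$ does not substitute for it.
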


\begin{proof}
(ii)$\Rightarrow$(i) If (ii) holds, then $\rs(X)=(\rs(X)\rr(X)^{-1})\rr(X)$ for every $X\in\dom\rr\cap\dom\rs$, and so $\ker\rr(X)\subseteq\ker\rs(X)$.

(i)$\Rightarrow$(ii) Suppose (ii) does not hold; thus there are $r\in\rr$, $s\in\rs$ and $Y\in\dom r\cap\dom s$ such that $\det r(Y)=0$. Similarly as in Section \ref{s4} denote
$$R=\{1\}\cup \{q\in\rex\setminus\C\colon q \text{ is a subexpression of }r \text{ or }s  \}$$
and let $\cR$ be its image in $\rx$. We also define finite-dimensional vector spaces $V_\ell$ and the finitely generated algebra $V$ as before. The left ideal $V\rr$ in $V$ is proper: if $\qq\rr=1$ for $q\in V$, then $q(Y)r(Y)=I$ since $Y\in\dom q$, which contradicts $\det r(Y)=0$. Furthermore, $\rs\notin V\rr$ since (ii) does not hold. Let $K=V/V\rr$, and let $K_\ell$ be the image of $V_\ell$ for every $\ell\in\N$. Let $\fX_j: K\to K$ be the operator given by the left multiplication with $x_j$; note that $\fX_j(K_\ell)\subseteq K_{\ell+1}$ for all $\ell$. By induction on the construction of $q\in R$ it is straightforward to see that $q(\fX)$ is well-defined for every $q\in R$. Let $\ell=2\max\{\tau(r),\tau(s)\}-2$. By Proposition \ref{p:main2} there exist a finite-dimensional vector space $U$ and a $d$-tuple of operators $X$ on $K_{\ell+1}\oplus U$ such that
$X\in\dom r\cap\dom s$ and
$$X_j|_{K_\ell}=\fX_j|_{K_\ell}$$
for $j=1,\dots,d$. A slight modification of Proposition \ref{p:gns} implies that
$$r(X)[1]=[\rr]=0,\qquad s(X)[1]=[\rs]\neq0$$
where $[\qq]\in K$ denotes the image of $\qq\in V$.
\end{proof}

The implication (i)$\Rightarrow$(ii) in Proposition \ref{p:canc} fails if only hermitian domains are considered (e.g. take $\rr=x_1^2$ and $\rs=x_1$). It is also worth mentioning that while Proposition \ref{p:canc} might look rather straightforward at first glance, there is a certain subtlety to it. Namely, the equivalence in Proposition \ref{p:canc} fails if only matrix tuples of a fixed size are considered. For example, let $\rr=x_1$ and $\rs=x_1x_2$;
then $\dom_1\rr\cap\dom_1\rs=\C^2$ and $\ker\rr(X)\subseteq\ker\rs(X)$ for all $X\in \C^2$, but
$\dom_1 (\rs\rr^{-1})=\C\setminus\{0\}\times\C$ (cf. \cite[Example  2.1 and Theorem 3.10]{Vol}).


\end{document}